\newcommand \datum {March 7, 2021, version 1.1}
\numberwithin{equation}{section}
\theoremstyle{plain}
 \newtheorem*{namedtheorem}{\theoremname}
 \newcommand{\theoremname}{testing}
 \newenvironment{named}[1]{\renewcommand{\theoremname}{#1}
   \begin{namedtheorem}}
   {\end{namedtheorem}}
 \newtheorem{theorem}{Theorem}[section]
 \newtheorem{lemma}[theorem]{Lemma}
 \newtheorem{corollary}[theorem]{Corollary}
\theoremstyle{definition}
 \newtheorem{definition}[theorem]{Definition}
 \newtheorem{problem}[theorem]{Problem}
 \newtheorem{remark}[theorem]{Remark}
\newenvironment{enumeratei}{\begin{enumerate}[\upshape (i)]}%
                            {\end{enumerate}}
\newcommand \LineR [1] {\textup{Line}_{\tup{R}} #1} 
\newcommand \LineL [1]  {\textup{Line}_{\tup{L}} #1} 
\newcommand{\gls}{\overset{\tbf{.}} {+} }
\newcommand \rlambda {\mathrel{\lambda}}
\newcommand \fullrect [1] {\textup{FulR}(#1)}
\newcommand \lftmin {^-\kern-1pt}
\newcommand \cardplus {\mathop{\dot\cup}}
\newcommand {\ordplus}{+}
\newcommand \Jir [1] {\textup{J}(#1)} 
\newcommand \Mir [1] {\textup{M}(#1)}
\renewcommand \phi{\varphi}
\newcommand \restrict [2] {{#1}\kern-1pt \rceil_{\kern-1pt #2}}
\newcommand \brho {{{\rho}}}
\newcommand \ideal[1]{\mathord\downarrow\, #1}
\newcommand \filter[1]{\mathord\uparrow\, #1}
\newcommand \bdia {\mathcal C_1}
\newcommand \cdia {\mathcal C_2}
\newcommand \inn {{\mathfrak n}}
\newcommand \inp {{\mathfrak p}}
\newcommand \foot [1] {\textup{Foot}(#1)}
\newcommand \Foot [1] {\textup{Foot}(#1)}
\newcommand \tuple [1] {(#1)}
\newcommand \pair [2] {\tuple{#1,#2}}
\newcommand \cornl [1] {\textup{lc}(#1) }
\newcommand \cornr [1] {\textup{rc}(#1) } 
\newcommand \Sn[1] {S_7^{(#1)}}
\newcommand \OLit [1] {\textup{OLit}(#1)}
\newcommand \Lit [1] {\textup{Lit}(#1)}
\newcommand \LLit [1] {\textup{LeftLit}(#1)}
\newcommand \RLit [1] {\textup{RightLit}(#1)}
\newcommand \LRoof [1] {\textup{LRoof}(#1)}
\newcommand \LFloor [1] {\textup{LFloor}(#1)}
\newcommand \RRoof [1] {\textup{RRoof}(#1)}
\newcommand \RFloor [1] {\textup{RFloor}(#1)}
\newcommand \Lamps[1] {\textup{Lamp}(#1)}
\newcommand \Lamp[1] {\textup{Lamp}(#1)}
\newcommand \rhgeomb {\brho_{\textup{Body}}}
\newcommand \rhfoot {\brho_{\textup{foot}}}
\newcommand \rhinfoot {\brho_{\textup{infoot}}}
\newcommand \rhalg {\brho_{\textup{alg}}}
\newcommand \body [1] {\textup{Body}(#1)}
\newcommand \jirdprec {\mathrel{\prec_{\kern-1pt\Jir D}}}
\newcommand \Max [1] {\textup{Max}(#1)}
\newcommand \Peak [1] {\textup{Peak}(#1)}
\newcommand\red[1]{{\textcolor{red}{#1}\color{black}}}
\newcommand \nothing [1] {}
\newcommand{\msf}{\mathsf}
\newcommand{\SM}[1]{\msf{M}_{#1}}
\DeclareMathOperator{\Con}{Con}
\newcommand{\SB}[1]{\msf{B}_{#1}}
\newcommand{\SC}[1]{\msf{C}_{#1}}
\newcommand{\lp}{\tup{(}}
\newcommand{\tup}{\textup}
\newcommand{\rp}{\tup{)}}
\newcommand{\jj}{\vee}
\newcommand{\mm}{\wedge}
\newcommand{\tbf}{\textbf}
\newcommand{\setm}[2]{\{\,#1\mid#2\,\}}
\newcommand{\set}[1]{\{#1\}}
\newcommand{\iso}{\cong}
\newcommand{\con}[1]{\tup{con}(#1)}
\newcommand{\es}{\varnothing}
\newcommand\recyclebin[1]{}
\begin{document}



\title[Congruence lattices of slim, planar, semimodular lattices]
{A new property of congruence lattices\\ of slim, planar, semimodular lattices}
\author[G.\ Cz\'edli]{G\'abor Cz\'edli}
\email{czedli@math.u-szeged.hu}
\urladdr{http://www.math.u-szeged.hu/~czedli/}
\address{University of Szeged, Bolyai Institute. 
Szeged, Aradi v\'ertan\'uk tere 1, HUNGARY 6720}

\author[G.\ Gr\"atzer]{George Gr\"atzer}
\email{gratzer@me.com}
\urladdr{http://server.maths.umanitoba.ca/homepages/gratzer/}
\address{University of Manitoba}

\begin{abstract} 
The systematic  study of   planar semimodular lattices started  in  2007
with a series of  papers by  G. Gr\"atzer and E. Knapp.  
These lattices have connections with  group theory and geometry. 
A planar semimodular  lattice  $L$  is  \emph{slim} if  $\SM{3}$ 
it is  not  a sublattice of  $L$. 
In  his 2016   monograph, ``The  Congruences of a  Finite  Lattice,  
A ~ \emph{Proof-by-Picture Approach}", 
the second author asked for a characterization of  congruence  lattices 
of slim, planar, semimodular lattices.
In addition to distributivity,  
both  authors have previously  found specific properties of  these   congruence lattices.  
In this paper, we present a  new  property, 
the  \emph{Three-pendant Three-crown Property}.
The proof is based on the first author's papers: 2014 (multifork extensions), 
2017 ($\bdia$-diagrams), and a recent paper (lamps),
introducing the tools we need.
\end{abstract}

\thanks{This research of the first author was supported by the National Research, Development and Innovation Fund of Hungary, under funding scheme K 134851.}

\subjclass {06C10\hfill{\red{\tbf{\datum}}}}

\dedicatory{Dedicated to B\'ela Cs\'ak\'any on his forthcoming ninetieth birthday}

\keywords{Rectangular lattice, patch lattice, slim  semimodular lattice, 
congruence lattice, lattice congruence, Three-pendant Three-crown Property}

\maketitle    

\section{Introduction}\label{S:Introduction}
\subsection{The Main Theorem}\label{MainTheorem} 

The book G. Gr\"atzer~\cite{CFL2} presents many results
characterizing congruence lattices of various classes of finite lattices, 
spanning 80 years, up to 2015. 
In particular, in 1996, G. Gr\"atzer, H. Lakser, and  E.\,T. Schmidt \cite{GLS98} 
started looking at the class of semimodular lattices
and were surprised: every finite distributive lattice can be represented 
as the congruence lattice of a \emph{planar} semimodular lattice.

The sublattice $\SM 3$ played a crucial role in the Gr\"atzer-Lakser-Schmidt construction,
so it was natural to ask (see Problems 1 in G. Gr\"atzer~\cite{gG16}, 
originally raised in G. Gr\"atzer~\cite{CFL2}) 
what happens if, in addition to planarity and semimodularity, 
we also assume that the lattice is \emph{slim}, that is,
it does not have $\SM 3$ sublattices.

\begin{problem}\label{P:main}
What are the congruence lattices of slim, planar, semimodular lattices?
\end{problem}

We call a slim, planar, semimodular lattice an SPS lattice. 
A finite distributive lattice $D$ is \emph{representable by an SPS lattice $L$}
(in short, \emph{representable}) if~$D$ is isomorphic
to the congruence lattice $\Con L$  of an SPS lattice $L$.

We say that a finite distributive lattice $D$ satisfies 
the \emph{Three-pendant Three-crown Property}
if the ordered set $R_3$ of Figure~\ref{F:properties} has no cover-preserving
embedding into $\Jir D$.

Our paper continues the research in G. Cz\'edli~\cite{gCa} 
that presented four new properties of $\Con L$. We provide one more.

Now we can state our result.

\begin{named}{Main Theorem} 
Let $L$ be a slim, planar, semimodular lattice. 
Then $\Con L$ satisfies the Three-pendant Three-crown Property.
\end{named}

We have one more theorem in this paper.

\begin{theorem}\label{T:patch}
Let $n$ be a positive integer number and $L_1$, \dots, $L_n$ be 
slim, planar, semimodular lattices with at least three elements.
Then there exists a slim rectangular lattice $H$ 
and a slim patch lattice $L$ such that the following two isomorphisms hold:
\begin{align} \Con H&\cong \Con {L_1}\times \dots \times \Con{L_n},
\label{patch1}\\
\Con L&\cong \bigl(\Con {L_1}\times \dots \times \Con{L_n}\bigr) \ordplus \SB 2,\label{patch2}
 \end{align}
\end{theorem}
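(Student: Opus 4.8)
The plan is to reduce both isomorphisms to one gluing construction, carried out inside the trajectory/lamp description of congruences furnished by the cited papers. First I would replace each $L_i$ by a rigid building block: using a rectangular extension of $L_i$ together with the multifork machinery, I would produce a slim patch lattice $P_i$ with $\Con{P_i}\cong\Con{L_i}$. The assumption that every $L_i$ has at least three elements is what guarantees that $\Con{L_i}$ is nontrivial and that $P_i$ is a genuine, nondegenerate patch lattice, so that no factor collapses. This turns the problem into a statement about a fixed list $P_1,\dots,P_n$ of patch lattices, whose congruence lattices I must assemble once as a direct product and once as that product with $\SB 2$ placed on top.

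Next I would build $H$ by gluing $P_1,\dots,P_n$ in a staircase along their boundary chains, keeping the result slim, planar, semimodular, and rectangular. The decisive point, to be verified with the lamp/trajectory calculus, is that the gluing can be arranged so that prime intervals of distinct blocks always lie on distinct trajectories and no trajectory of one block forces the collapse of a trajectory of another. In terms of join-irreducible congruences this says that the forcing poset $\Jir{\Con H}$ is the disjoint union of the posets $\Jir{\Con{P_i}}$, with no comparabilities across blocks. Since a finite distributive lattice is recovered from the poset of its join-irreducibles, this yields $\Con H\cong\Con{P_1}\times\dots\times\Con{P_n}\cong\Con{L_1}\times\dots\times\Con{L_n}$, which is \eqref{patch1}.

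For $L$ I would perform the analogous gluing but arrange the very top of the assembly to be a patch top, so that the corners $\cornl L$ and $\cornr L$ are coatoms. The two top edges $[\cornl L,1]$ and $[\cornr L,1]$ then lie on two distinct fresh trajectories, and the lamp calculus should show that they contribute two join-irreducible congruences that are incomparable to one another and lie above every join-irreducible congruence coming from the blocks. Hence $\Jir{\Con L}$ is $\Jir{\Con H}$ with a two-element antichain adjoined above everything, which is precisely $\Jir{\bigl(\Con{L_1}\times\dots\times\Con{L_n}\bigr)\ordplus\SB 2}$; reading this back as distributive lattices gives \eqref{patch2}.

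I expect the main obstacle to be the gluing step. One must merge the blocks into a single slim \emph{rectangular} (respectively patch) lattice while \emph{proving} that congruence generation inside the blocks stays completely independent across the gluing chains, that is, controlling exactly which trajectories merge or force one another at each boundary. This is the delicate point for which the multifork extensions and the lamp apparatus were developed, and I would lean on them throughout rather than on direct congruence computation.
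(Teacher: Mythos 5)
Your plan for \eqref{patch2} founders on a misidentification of the target poset. In the ordinal sum $D\ordplus\SB 2$, where $D=\Con{L_1}\times\cdots\times\Con{L_n}$, the bottom element of the $\SB 2$-summand covers exactly $1_D$, so it is itself join-irreducible; hence $\Jir{D\ordplus\SB 2}$ is obtained from $\Jir D$ by adding \emph{three} elements $W\prec U$, $W\prec V$, $U\parallel V$, with $W$ above all of $\Jir D$. What you construct --- a \emph{two}-element antichain adjoined above everything --- is $\Jir{D\gls\SB 2}$, the glued sum, which is a different (smaller) lattice; so even if your gluing worked, you would have proved a different statement. Worse, that statement is false: $D\gls\SB 2$ is in general not representable at all. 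Take $n=1$ and $L_1=\Sn 1$ (the seven-element lattice $S_7$), so that $\Jir{\Con{L_1}}=\set{w,u,v}$ with $w\prec u$, $w\prec v$, $u\parallel v$. In $\Jir{\Con{L_1}\gls\SB 2}$ each of the two new maximal elements covers both $u$ and $v$, while $u$ and $v$ still have the common lower cover $w$; this contradicts Theorem~\ref{T:Background1}(iv) (equivalently, it collides with the Key Lemma~\ref{L:K}). So no SPS lattice has this congruence lattice, and no rearrangement of the gluing can repair the plan: the third new join-irreducible is unavoidable. This is precisely what the paper's construction manufactures --- the blocks are multifork-inserted into the patch lattice $\Sn t$, whose unique \emph{internal} lamp $W$ ends up above all block lamps and below the two boundary lamps $U$ and $V$.

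Two further steps are also broken. First, your opening reduction --- a slim patch lattice $P_i$ with $\Con{P_i}\cong\Con{L_i}$ --- is impossible in general: by Lemma~\ref{L:Keylemma}, the congruence lattice of a slim patch lattice has exactly two maximal join-irreducibles, whereas the grid $\SC 2\times\SC 3$ is a slim rectangular lattice with $\Con(\SC 2\times\SC 3)\cong\SB 3$, whose join-irreducibles form a three-element antichain. Second, gluing blocks along (nontrivial) boundary chains cannot keep their congruence structures independent: a prime interval on a shared chain lies in both blocks, and Hall--Dilworth gluing along chains merges congruences --- glue two copies of $\SB 2$ along an edge and you get $\SC 2\times\SC 3$, with congruence lattice $\SB 3$ rather than $\SB 2\times\SB 2$. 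Independence of the blocks holds only for one-point gluing, i.e., the glued sum $L_1\gls\cdots\gls L_n$, which is how the paper proves \eqref{patch1} (with the Swing Lemma certifying that no congruence spreads from one summand to another); but the glued sum is never rectangular, since every element is comparable to the cut element $1_{L_1}=0_{L_2}$, which is incompatible with the existence of complementary corners. Hence the final appeal to Lemma~\ref{L:recext}, replacing the glued sum by a slim rectangular lattice with the same congruence lattice, cannot be skipped in your argument either; your intended direct construction of a rectangular $H$ by chain gluing proves neither rectangularity nor the product decomposition.
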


In \eqref{patch2}, the operation $+$ stands for ordinal sum $($the second summand is put on the top of the first one$)$.
We define rectangular lattices and patch lattices in Section ~\ref{S:notation}. 

\subsection{Background}\label{S:Background}
G.~Gr\"atzer and E.~Knapp~\cite{GKn07}--\cite{GKn10} started the study of planar semimodular lattices. 
There are a number of  surveys of this field, 
see the book chapter G.~Cz\'edli and G.~Gr\"atzer~\cite{CG14} 
in G.~Gr\"atzer and F.~Wehrung, eds.\,~\cite{LTS1},
and G.~Cz\'edli and \'A.~Kurusa~\cite{CK19}.
For the topic: congruences of planar semimodular lattices, 
see the book chapter G.~Gr\"atzer~\cite{gG13b} 
in G.~Gr\"atzer and F.~Wehrung, eds.\,~\cite{LTS1}.

This research have also led to results outside of lattice theory:
to a group theoretical result by G.~Cz\'edli and E.\,T.~Schmidt~\cite{CS11} 
and G.~Gr\"atzer  and J.\,B.~Nation~\cite{GN10}, 
and to (combinatorial) geometric results by  G.~Cz\'edli~\cite{gC14b} and \cite{gC19}, 
K.~Adaricheva and G.~Cz\'edli ~\cite{AC14}, 
and  G.~Cz\'edli and \'A.~Kurusa~\cite{CK19}. 
G.~Cz\'edli and G.~Makay~\cite{CM17} presented a computer game based on these lattices. 
G.~Cz\'edli~\cite{gCb} is a related model theoretic paper.

The next two theorems summarize what we know about congruence lattices of SPS lattices. 
(In both theorems, the covering relations are those of the ordered set $\Jir{\Con L}$ and not of the lattice $\Con L$.)

\begin{theorem}[G.~Gr\"atzer ~\cite{gG16} and~\cite{gG20}]\label{T:Background}
Let $L$ be an SPS lattice with at least three elements.
\begin{enumeratei}
\item The ordered set $\Jir {\Con L}$ has at least two maximal elements.
\lp Equivalently, $\Con L$ has at least two coatoms.\rp\label{E:2M}
\item Every element of the ordered set $\Jir {\Con L}$ has at most two covers.
\end{enumeratei}
\end{theorem}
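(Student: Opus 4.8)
The first thing I would do is rephrase part~(\ref{E:2M}) in a form that is easy to attack. In a finite distributive lattice the coatoms correspond bijectively to the maximal elements of its poset of join-irreducibles (a maximal join-irreducible $\mu$ yields the coatom obtained by deleting $\mu$ from $\Jir{\Con L}$, viewed as a down-set), so the two formulations in (\ref{E:2M}) are equivalent and it suffices to exhibit two distinct maximal elements of $\Jir{\Con L}$. Equivalently, since every join-irreducible congruence of a finite lattice is the principal congruence $\con{\inp}$ of some prime interval $\inp$, and conversely each $\con{\inp}$ is join-irreducible, the task is to show that the largest congruence of $L$ is \emph{not} of the form $\con{\inp}$; that is, \emph{no single edge collapses all of $L$} once $|L|\ge 3$. (For the two-element lattice the single edge does collapse everything, which is exactly why the hypothesis $|L|\ge 3$ is needed.)

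I would then prove (\ref{E:2M}) by cases on the top and, dually, the bottom. If $1$ has a unique lower cover $c$, then every element except $1$ lies below $c$, so $L$ is $[0,c]$ with a new top adjoined; adjoining a new top element enlarges the congruence lattice by a single new join-irreducible congruence, namely $\con{[c,1]}$, which is incomparable to all others and hence maximal, while $[0,c]$ (which has at least two elements because $|L|\ge 3$) already contributes a maximal element of its own. That yields two, and the dual argument settles a unique upper cover of $0$. In the remaining case $1$ has at least two lower covers; I take the leftmost one $c_1$ and the rightmost one $c_2$ in the planar diagram and claim that $\con{[c_1,1]}$ and $\con{[c_2,1]}$ are two distinct maximal elements of $\Jir{\Con L}$. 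For maximality I would invoke the Swing Lemma: any witness to $\con{[c_1,1]}\le\con{\inp}$ is a sequence of prime-perspectivities and swings running from $\inp$ down to $[c_1,1]$ along which the tops never increase, so every interval in it has top $1$; since $c_1$ is the extreme-left lower cover, no swing and no nontrivial perspectivity can terminate at $[c_1,1]$, forcing equality.

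For part~(ii) I would again use the Swing Lemma. Represent $x\in\Jir{\Con L}$ as $\con{\inq}$ with $\inq$ the top edge of its trajectory. Then $\con{\inp}>\con{\inq}$ holds precisely when there is a descending swing/prime-perspectivity sequence from $\inp$ to $\inq$, so a \emph{cover} of $\con{\inq}$ is realized by a single elementary step landing on $\inq$. Reading such a step in the planar diagram, it is determined by a lower cover of the top of $\inq$ lying immediately to the left, or immediately to the right, of $\inq$; slimness and planarity leave at most one neighbour on each side, whence $\con{\inq}$ has at most two covers. The lattice $S_7$, where the central join-irreducible produced by a fork has exactly the two covers coming from the left and right arms of that fork, shows that two is best possible.

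The hard part, in both statements, is the bookkeeping concealed in the phrase ``at most one relevant step on each side.'' One must use the precise form of the Swing Lemma to verify that in~(\ref{E:2M}) no swing can terminate at an extreme top edge and that the leftmost and rightmost top edges generate \emph{distinct} congruences, and that in~(ii) every cover is produced by a single left- or right-step while non-adjacent swings produce strictly intermediate join-irreducibles, so that nothing past the two immediate neighbours contributes. Both reductions rest on the trajectory and prime-perspectivity calculus of the cited works; arranging it so that the planar left/right dichotomy yields cleanly the numbers two (for maximal elements) and two (for covers) is where the genuine work lies.
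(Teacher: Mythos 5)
First, a point of reference: the paper does not prove this theorem at all; it is quoted as background and attributed to \cite{gG16} and \cite{gG20}. So your proposal cannot be compared with an internal proof and must be judged on its own merits.

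Your part (i) is essentially sound. The coatom/maximal-element translation is standard; in the unique-lower-cover case $L=\ideal c\cup\set{1}$ indeed gives $\Con L\cong \Con(\ideal c)\times \SC 2$, so $\con{[c,1]}$ is one maximal element and $\Jir{\Con(\ideal c)}\neq\es$ supplies another; and in the remaining case the Swing Lemma does exactly what you say: along a witnessing sequence ending at $[c_1,1]$ all tops must equal $1$, a prime-perspectivity down between distinct edges with top $1$ is impossible, and no swing can land on $[c_1,1]$ because $c_1$ is the \emph{leftmost} lower cover of $1$; hence $\con{[c_1,1]}$ is maximal, symmetrically for the rightmost lower cover, and the two congruences are distinct. (You must remember the initial up-perspectivity in the precise Swing Lemma, but it preserves the generated congruence, so it changes nothing.)

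Part (ii), however, has a genuine gap, and it is exactly the step you defer. Two concrete problems. First, a covering $\con{\inp}\succ\con{\inq}$ is witnessed by an elementary step landing on \emph{some} edge generating $\con{\inq}$, not on your chosen representative $\inq$: in an SPS lattice the edges generating one join-irreducible congruence typically spread over several trajectories and several tops (after a $k$-fold multifork insertion with $k\geq 2$, all $k$ interior edges under the peak generate the same congruence yet lie in $k$ distinct trajectories, as do their perspective images lower down), so fixing ``the top edge of its trajectory'' does not normalize the witnesses. Second, even for steps landing on $\inq=[c,t]$ itself, they are not ``determined by a lower cover of $t$ immediately to the left or right of $\inq$'': a swing into $[c,t]$ may start at \emph{any} other edge $[a,t]$ over a lower cover $a$ of $t$ (the ones that can produce a strictly larger congruence are the leftmost and rightmost lower covers, which need not be adjacent to $c$), and a prime-perspectivity down into $[c,t]$ starts at an edge $[a,a\vee t]$ whose top lies strictly \emph{above} $t$ — such witnesses are not indexed by neighbours of $c$ at all. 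Bounding the number of distinct cover congruences arising from all of these is the entire content of the statement; your sketch both misdescribes the witnesses and explicitly leaves the bound unproved (``where the genuine work lies''). For what it is worth, the machinery of the present paper yields (ii) cleanly: by Lemma~\ref{L:lamp}, every cover $J$ of a lamp $I$ satisfies $\foot I\in\OLit J$; by the Key Lemma (Lemma~\ref{L:K}) any two covers of $I$ are $\rlambda$-comparable; and if $J_1\rlambda J_2\rlambda J_3$ were three covers, then $\foot I\in\OLit{J_1}\cap\OLit{J_3}$ forces $\foot I\notin\OLit{J_2}$ — the same computation that finishes the proof of the Main Theorem.
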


The three element chain is an example that the necessary condition
\eqref{E:2M} for representability is not sufficient.
G. Cz\'edli \cite{gC14a} gave an eight element distributive lattice
to~show that the necessary condition (ii) 
for representability is not sufficient.
See~also G. Gr\"atzer~\cite{gG15a}.

Our paper is a continuation of G.~Cz\'edli~\cite{gCa}. 
Here are some of the results of this paper.

\begin{theorem}[G.~Cz\'edli~\cite{gCa}]\label{T:Background1}
Let $L$ be an SPS lattice with at least three elements.

\begin{enumeratei}

\item {The set of maximal elements 
of the ordered set $\Jir {\Con L}$ 
can be represented as the disjoint union
of two nonempty subsets such that no two distinct elements 
in the same subset have a common lower cover.\label{E:LC} 

\item The ordered set $R$ of~ Figure~\ref{F:properties} 
cannot be embedded as a cover-preserving subset 
into the ordered set $\Jir {\Con L}$ provided that any maximal element of $R$
is a maximal element of $\Jir {\Con L}$.

\item If $x \in\Jir {\Con L})$ is covered by a maximal element $y$ of $\Jir {\Con L}$, 
then $y$ is not the only cover of $x$ in
the ordered set  $\Jir {\Con L}$.}

\item Let $x \neq y \in \Jir {\Con L}$, and let $z$ be a maximal element of $\Jir{\Con L}$. Assume that  both $x$ and $y$ are covered by $z$ in the ordered set $ \Jir {\Con L}$. Then there is no element $u \in \Jir {\Con L}$ such that $u$ is covered by $x$ and $y$ in  $\Jir {\Con L}$.
\end{enumeratei}
\end{theorem}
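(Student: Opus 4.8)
The plan is to reduce all four assertions to the combinatorial geometry of \emph{trajectories} and \emph{lamps} and then read each order-theoretic statement off a planar diagram. First I would replace $L$ by a slim rectangular lattice $R$ obtained by extending $L$ (Gr\"atzer--Knapp, and the multifork-extension description of how rectangular lattices are built from a grid), keeping careful track of how $\Jir{\Con L}$ sits inside $\Jir{\Con R}$. Congruence generation in slim semimodular lattices is governed entirely by prime intervals, and by the trajectory description every join-irreducible congruence has the form $\con{\mathfrak p}$ for a prime interval $\mathfrak p$, with $\con{\mathfrak p}=\con{\mathfrak q}$ exactly when $\mathfrak p$ and $\mathfrak q$ lie on the same trajectory. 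Hence $\Jir{\Con L}$ is in bijection with the set of trajectories, equivalently with the lamps of the $\bdia$-diagram of $R$, and I would fix this dictionary once and for all.

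The heart of the argument is translating the covering relation of $\Jir{\Con L}$ into a geometric incidence between lamps. Using the lamp formalism, $\con{\mathfrak q}<\con{\mathfrak p}$ should hold precisely when the foot of the lamp of $\mathfrak q$ lies under the body of the lamp of $\mathfrak p$, and a covering $\con{\mathfrak q}\prec\con{\mathfrak p}$ corresponds to such an incidence with no intermediate lamp. The maximal elements of $\Jir{\Con L}$ are then the trajectories whose top edges lie on the upper-left and upper-right boundary chains of the rectangular diagram. This immediately supplies the left/right bipartition of the maximal lamps demanded by (i); and the reason no two distinct maximal elements in the same block can share a lower cover is that a common lower cover sitting under two top edges on the same side would force a covering $\SM3$, contradicting slimness. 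One must check the bipartition can always be arranged so that this holds within each block.

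For (ii), (iii) and (iv) I would argue by contradiction, assuming the offending sub-configuration of Figure~\ref{F:properties} embeds cover-preservingly with its maximal elements landing on maximal lamps. Each pattern then pins down a small arrangement of lamps whose feet and bodies are constrained by the prescribed order data, and in each case the constraints place three or more lamp bodies and their feet into a planar position that either creates a forbidden crossing of trajectories or forces an $\SM3$ into $L$, contradicting planarity or slimness. For (iv) in particular, a common lower cover $u$ beneath two lamps that share the maximal cover $z$ would require the foot of the lamp of $u$ to sit simultaneously under two distinct top-side lamps in a manner no single planar diagram can realize. The bulk of the work here is the finite case analysis eliminating each admissible lamp configuration.

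The main obstacle I anticipate is not any individual case but the faithful set-up of the dictionary under the reduction: I must verify that passing from $L$ to its rectangular extension $R$ preserves \emph{covers} in the ordered set $\Jir{\Con L}$ and introduces no spurious comparabilities, so that a cover-preserving copy of a forbidden pattern in $\Jir{\Con L}$ genuinely yields one among the lamps of $R$. Controlling exactly which lamps of $R$ descend from $L$, and confirming that the order on $\Jir{\Con L}$ is the restriction (covers and all) of the order on $\Jir{\Con R}$, is the delicate part; once that is secured, the geometric incidence calculus for lamps turns each of (i)--(iv) into a short planarity argument.
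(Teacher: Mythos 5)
Your overall frame is the right one---this paper never proves Theorem~\ref{T:Background1} itself (it quotes it from Cz\'edli~\cite{gCa}), and the machinery that does prove it is exactly the lamp formalism imported in Section~\ref{S:tools} together with the reduction to slim rectangular lattices. But your set-up contains a genuine error, and your anticipated ``delicate part'' is the wrong one. The error is the dictionary: it is false that $\con{\inp}=\con{\inq}$ holds exactly when $\inp$ and $\inq$ lie on the same trajectory, so $\Jir{\Con L}$ is \emph{not} in bijection with the set of trajectories. A neon tube has a meet-irreducible bottom, so it is not the lower side of any $4$-cell and is therefore the unique top edge of its trajectory; hence distinct neon tubes lie in distinct trajectories. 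Consequently the lattice $\Sn 7$ used in Section~\ref{S:Rectangular} has nine trajectories (seven internal neon tubes of its single internal lamp $W$, plus the two boundary ones), yet only three join-irreducible congruences. The correct bijection is with \emph{lamps} (Lemma~\ref{L:lamp}), a lamp may carry many neon tubes, and even then $I<J$ is only the reflexive-transitive closure of the relation ``$\Foot I$ lies in $\Lit J$,'' not that relation itself. By contrast, the reduction you worry about at length is a non-issue: Gr\"atzer--Knapp (Lemma~\ref{L:recext}, \cite{GKn09}) gives an outright isomorphism $\Con L\cong\Con R$ with $R$ slim rectangular, so $\Jir{\Con L}$ and $\Jir{\Con R}$ are isomorphic ordered sets, covers and all; nothing needs to be ``controlled.''

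The substantive gap is that none of (i)--(iv) is actually proved: ``creates a forbidden crossing of trajectories or forces an $\SM 3$ into $L$, contradicting planarity or slimness'' is an assertion, not an argument, and it is not the mechanism that works (the lattice is slim by hypothesis; nothing in these configurations produces an $\SM 3$ sublattice). What the real proofs use---see \cite{gCa}, and the closely parallel Key Lemma~\ref{L:K} of this paper---is the following: a common lower cover of lamps $I\neq I'$ forces $\OLit I\cap\OLit{I'}\neq\es$, as in \eqref{E:olit}; two left (or two right) boundary lamps always have disjoint open lit sets, which already yields (i) with the bipartition ``left boundary lamps'' versus ``right boundary lamps'' (both classes nonempty for a slim rectangular lattice); and every configuration involving internal lamps requires the apparatus of coordinate quadruples, shields, the equality $\rhfoot=\rhalg$, and planarity of the diagram (edges of lit-set boundaries cannot cross shield edges) to be ruled out---a case analysis of which your sketch contains none. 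For (iv) you also misread the configuration: only $z$ is assumed maximal, while $x$ and $y$ are arbitrary join-irreducible congruences covered by $z$, so ``the foot of the lamp of $u$ under two distinct top-side lamps'' is not the situation to exclude; the argument must combine the common \emph{upper} cover $z$ (a boundary lamp) with the common \emph{lower} cover $u$, which is precisely the shape of the proof of the Main Theorem in Section~\ref{S: MainTheorem}. As it stands, your proposal is a plan for a proof, not a proof.
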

\begin{figure}[ht]
\centerline
{ \includegraphics[scale=1.0]{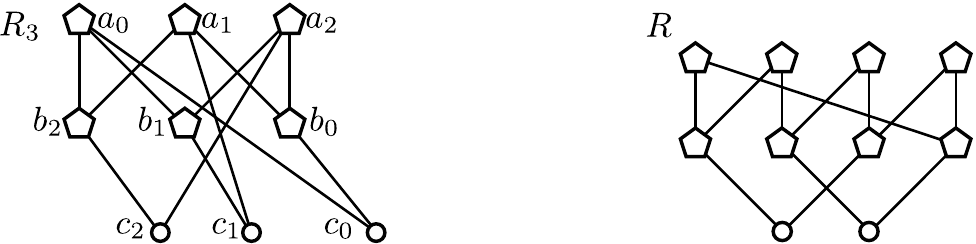}}
\caption{The Three-pendant Three-crown ordered set $R_3$
and the Two-pendant Four-crown ordered set $R$; 
the elements of the crowns are pentagons}\label{F:properties}
\end{figure}

\subsection*{Outline}Section~\ref{S:notation} recalls some concepts. 
Section~\ref{S:tools} recalls some of the tools developed in G.~Cz\'edli~\cite{gCa} while we develop some new tools in Section~\ref{sectionproofs}. 
We prove our Main Theorem in  Section~\ref{S: MainTheorem}. Finally, Section~\ref{S:Rectangular} proves  Theorem~\ref{T:patch} and discusses what we know about the congruence lattices of slim patch lattices.

\section{Basic notation and concepts}\label{S:notation}

All lattices in this paper are finite. 
We assume that the reader is familiar is with the rudiments of lattice theory. Most basic concepts and notation not defined in this paper are available in Part I of the monograph G. Gr\"atzer~\cite{CFL2}, which is free to access.
In particular, the \emph{glued sum} of two lattices $A$ and $B$ 
is denoted by $A \gls B$ ($B$ is on the top of $A$ with the unit element of $A$ and the zero of $B$  identified, so $\SC 2 \gls \SC 2$ is $\SC 3$).
The $n$-element chain is $\SC n$, the Boolean lattice with $n$ atoms is $\SB n$, 
and $\SM 3$ is the $5$-element modular nondistributive lattice.
The \emph{set of maximal elements} of an ordered set $P$ will be denoted by $\Max P$.
In this paper, edges are synonymous with prime intervals.
  
For a finite lattice $L$, the set of (non-zero) join-irreducible elements 
and (non-unit) meet-irreducible elements 
will be denoted by $\Jir L$ and $\Mir L$, respectively, so 
$\Jir L\cap \Mir L$ is the set of  \emph{doubly irreducible} elements.
We denote by $x^*$ the unique cover of $x$ for $x \in \Mir L$. 
For an element $a \in L$, let $\ideal a = \setm{x \in L}{x \leq a}$ 
be the \emph{principal ideal generated by $a$} 
and  $\filter a=\setm{x \in L}{x \geq a}$ the \emph{principal filter generated by $a$}. 

A planar semimodular lattice is \emph{slim} if it does not contain $\SM 3$ as a sublattice; 
see  G.~Gr\"atzer  and E.~Knapp~\cite{GKn07}, \cite{GKn09}, 
G.~Cz\'edli and E.\,T.~Schmidt~\cite{CS11}. 

Let $L$ be a planar lattice. A  \emph{left corner} $\cornl L$  
(resp., right corner $\cornr L$) of  $L$ is a doubly-irreducible element in $L - \set{0, 1}$ 
on the left (resp., right) boundary of~$L$. We~define a \emph{rectangular lattice} $L$ 
as a planar semimodular lattice which has exactly one left corner, 
$\cornl L$, and exactly one right corner, $\cornr L$, and they are complementary,
that is, $\cornl L \jj \cornr  L = 1$ and $\cornl L \mm \cornr L = 1$
(see  G.~Gr\"atzer  and E.~Knapp~\cite{GKn07}).
Finally, a rectangular lattice in which both corners are coatoms 
are called a \emph{patch lattice}.

\section{Tools}\label{S:tools}
In this section, $L$ is a slim rectangular lattice with a fixed $\bdia$-diagram, 
as we shall soon define.

We call the directions of $\pair 1 1$ and $\pair 1{-1}$ \emph{normal} and 
any direction $\pair{\cos\alpha}{\sin \alpha}$ with $\pi/2 < \alpha <3\pi/2$  \emph{steep}. (In \cite{gC1} and other papers, the first author uses ``\emph{precipitous}'' instead of ``steep''.)
Edges and lines parallel to a steep vector 
are also called steep, and similarly for normal slopes. 

The following definition and result are crucial in the study of SPS lattices.

\begin{definition}[G.~Cz\'edli \cite{gC1}]\label{D:well}
A diagram of the slim rectangular lattice $L$ is a
\emph{$\bdia$-diagram}
if it has the following two properties.
\begin{enumeratei}
\item If $x \in \Mir L - (\filter{\cornl L}\cup \filter{\cornr L})$, 
   then the edge $[x,x^*]$ is steep.
\item Every edge not of the form $[x,x^*]$ as in (i)
   has a normal slope.
\end{enumeratei}
If, in addition, 
\begin{enumeratei}\setcounter{enumi}{2}
\item any two edges on the lower boundary are of the same geometric length,
\end{enumeratei}
then the diagram is a \emph{$\cdia$-diagram}.
\end{definition}

\begin{theorem}[G. Cz\'edli \cite{gC1}]\label{T:well}
Every slim rectangular lattice $L$ has a $\cdia$-diagram.
\end{theorem}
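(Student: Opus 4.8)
The plan is to prove existence by induction, using the fork-insertion description of slim rectangular lattices due to Cz\'edli and Schmidt~\cite{CS11}: every slim rectangular lattice is obtained from a grid $\SC m\times\SC n$ by a finite sequence of fork insertions (equivalently, multifork extensions), each performed at a covering square. I would induct on the number of insertions. The base case is the grid itself, drawn as the usual diamond-shaped point lattice in which the edge from $(i,j)$ to $(i+1,j)$ has slope $+1$ and the edge to $(i,j+1)$ has slope $-1$. In a grid every meet-irreducible element lies above $\cornl L$ or above $\cornr L$, so it belongs to $\filter{\cornl L}\cup\filter{\cornr L}$; hence condition (i) of Definition~\ref{D:well} is vacuous, condition (ii) holds because all edges are normal, and (iii) holds because, with unit horizontal spacing, all lower-boundary edges have length $\sqrt 2$. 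Thus the grid carries a $\cdia$-diagram.

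For the inductive step, suppose $L'$ arises from a slim rectangular lattice $L$ by a single fork insertion at a covering square $S$, and that $L$ already has a $\bdia$-diagram. A fork insertion leaves the old meet-irreducibles and their covers essentially in place and creates new interior meet-irreducibles, i.e.\ new elements of $\Mir{L'}-(\filter{\cornl{L'}}\cup\filter{\cornr{L'}})$; the inserted configuration supplies, for each such $x$, a distinguished cover-edge $[x,x^*]$, which is exactly the edge we want to render \emph{steep}. The geometric idea is to realize the insertion locally: inside the region occupied by $S$, place the new vertices along a steep segment descending from the top of $S$, so that each new edge $[x,x^*]$ is steep, while routing the remaining new edges with normal slope. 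To accommodate the steep edge together with the new covering squares created along the two lower sides of $S$, a neighborhood of $S$ must be ``opened up'' by a controlled stretch of the diagram; one must show this stretch can be chosen so that every pre-existing normal edge stays normal, every pre-existing steep edge stays steep, and no two edges cross. This gives a $\bdia$-diagram of $L'$, and by induction a $\bdia$-diagram of every slim rectangular lattice.

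It remains to upgrade a $\bdia$-diagram to a $\cdia$-diagram, i.e.\ to enforce condition (iii). Since the lower-boundary edges are all normal, an edge has prescribed length iff its horizontal projection does; so (iii) amounts to placing the lower-boundary vertices at equally spaced abscissae. I would do this last, by a global reparametrization: keep the combinatorial $\bdia$-diagram already produced, assign the bottom vertex the abscissa $0$ and the successive lower-boundary vertices equally spaced integer abscissae, and then recompute all vertex positions by walking upward and adding the (normal or steep) direction vector of each traversed edge, keeping the slope type of every edge fixed. The verification that these recomputed positions are single-valued (the two ways around each $4$-cell agree) and planarity-preserving is where semimodularity and slimness enter, via the parallelogram shape of covering squares.

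The step I expect to be the main obstacle is the inductive ``opening up'' of the second paragraph: arranging a local stretch that simultaneously (a) inserts genuinely steep new edges, (b) preserves the exact $\pm 1$ slopes of all old normal edges and the steepness of all old steep edges, and (c) creates no crossings. All three must hold at once, and because a fork insertion propagates new covering squares down the two lower edges of $S$, the perturbation is not purely local and its effect on distant edges must be tracked. Once this is controlled, the base case and the final rescaling for (iii) are comparatively routine.
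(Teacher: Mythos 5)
A preliminary point: the present paper does not prove Theorem~\ref{T:well} at all; it is imported as a tool from Cz\'edli~\cite{gC1}, so the only proof to compare yours with is the one in that reference. Your overall strategy --- induct on the number of fork insertions starting from a grid --- is indeed the skeleton of the known argument; note, however, that the structure theorem you invoke is not in \cite{CS11} (a Jordan--H\"older paper), but is the multifork-extension theorem quoted in Section~\ref{S:Rectangular} of this paper as \cite[Theorem 3.7]{gC14}.

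The genuine gap is that your inductive step is a statement of intent, not an argument, and it is exactly where the whole mathematical content of the theorem sits. You require a local ``opening up'' of the diagram around the 4-cell $S$ chosen so that (a) the new edges $[x,x^*]$ come out steep, (b) every old normal edge keeps slope exactly $\pm 1$ and every old steep edge stays steep, and (c) no crossings appear --- and you then concede you do not know how to arrange this. The situation is worse than ``not purely local'': a fork insertion adds new elements all the way down both lower trajectories of $S$, terminating on the lower boundary, so the bottom chains themselves acquire new vertices, and any re-spacing of the bottom propagates to every element above it. For the same reason your closing ``upgrade'' from a $\bdia$-diagram to a $\cdia$-diagram is not routine: after re-spacing the lower boundary you must re-choose the direction vector of every steep edge (steepness fixes only a range of slopes, not a vector) so that the two paths around each 4-cell still close up and planarity survives; this cycle-consistency problem is essentially equivalent to constructing the diagram from scratch. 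Your appeal to the ``parallelogram shape of covering squares'' does not help here, since in a $\bdia$-diagram a 4-cell may have a steep upper edge and thus need not be a parallelogram. What makes the induction tractable in \cite{gC1} is precisely that condition (iii) is not retrofitted at the end: the equal-spacing of the lower boundary is maintained throughout, and after each (multi)fork insertion all elements are re-placed globally via coordinates (compare the join-coordinates mentioned in Remark~\ref{remjcRdnSt}) rather than by locally deforming the previous picture. As it stands, your proposal correctly identifies the plan and the obstacle, but it does not overcome the obstacle.
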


The chains  $\ideal \cornl L$, $\filter \cornl L$,
$\ideal \cornr L$, and $\filter \cornr  L$ are called
the \emph{bottom left boundary chain}, 
\dots, \emph{top right boundary chain}. 
These chains have normal slopes 
and they are the sides of a geometric rectangle, 
which we call the \emph{full geometric rectangle} of $L$ and denote it by $\fullrect L$.
The four vertices of this rectangle are $0$, $1$, $\cornl L$, and $\cornr L$. 
The \emph{lower boundary} of $L$ is $\ideal \cornl L\cup  \ideal \cornr L$
and the \emph{upper boundary}  is $\filter \cornl L\cup  \filter \cornr L$.
With the exception of the corners,
no meet-irreducible element belongs to the lower boundary of $L$. 

The following is the central definition of G.~Cz\'edli~\cite{gCa}.

\begin{definition}\hfill

(A) Let $L$ be a slim rectangular lattice. 
The edges $[x, y]$ of $L$ with $x \in \Mir L$ are called \emph{neon tubes}. 
We call a neon tube $[x,y]$ on the upper boundary of $L$, 
a~\emph{boundary neon tube}; 
it is an \emph{internal neon tube}, otherwise. 
Equivalently, neon tubes with normal slopes are boundary neon tubes,
while steep neon tubes are internal. 

In Figures~\ref{F:lamps}, \ref{figbbsp}, and \ref{figgsch}, we represent the neon tubes by thick edges.

(B) A boundary neon tube $ \inn = [p, q]$ is also called a \emph{boundary lamp}.
This lamp~$I$ is an edge,  
the  neon tube $ \inn$ is the \emph{neon tube of the lamp~$I$}.  
Define $\foot I$ as $p$ and $\Peak I$ as $q$. 
If $\foot I$ is on the top left boundary chain, then $I$ is a \emph{left boundary lamp}; 
similarly, we define \emph{right boundary lamps}.  

In Figure~\ref{F:lamps}, the left boundary lamps and the right boundary lamps are $P_1,\dots, P_5$ and $Q_1,\dots,Q_6$, respectively, 
and $p_i=\foot{P_i}$ and $q_j=\foot {Q_j}$ for all $i$ and $j$.

(C) Every steep (that is, internal) neon tube $ \inn=[p,q]$ 
belongs to a unique \emph{internal lamp} $I=[\beta_q,q]$, 
where $\beta_q$ is the meet of all $p'\in L$ such that $[p' ,q]$ is a steep neon tube.  
For the lamp $I$, define the $\foot I$ as $\beta_q$ 
and the \emph{peak}  $\Peak I$ as~$q$.

In Figure~\ref{F:lamps}, there are five internal lamps, $A, \dots, E$ with 
$\foot A = a$, $\foot B = b $, and so on; also, $\Peak A=g$,  $\Peak B=h$, and  $\Peak C=z$; 
so $A = [a,g]$, $B=[b, h]$, and $C=[c,z]$. 

(D) The set $\Lamps L$ consists of all lamps of $L$. 
For example, for the lattice $L$ in Figure~\ref{F:lamps}, there are $16$ lamps in $L$.

(E) A lamp $I$  determines a geometric region 
(as in David Kelly and I.~Rival~\cite{KR75}) 
which we call the \emph{body} of $I$, and denote it by $\body I$. 
It has a geometric shape: it is either a line segment
or a quadrilateral whose lower sides have normal slopes
and whose upper sides are steep.
 
In Figure~\ref{F:lamps}, the regions $\body A$, $\body B$, and $\body C$
are colored dark-grey.
\end{definition}

\begin{figure}[ht]
\centerline
{ \includegraphics[scale=0.9]{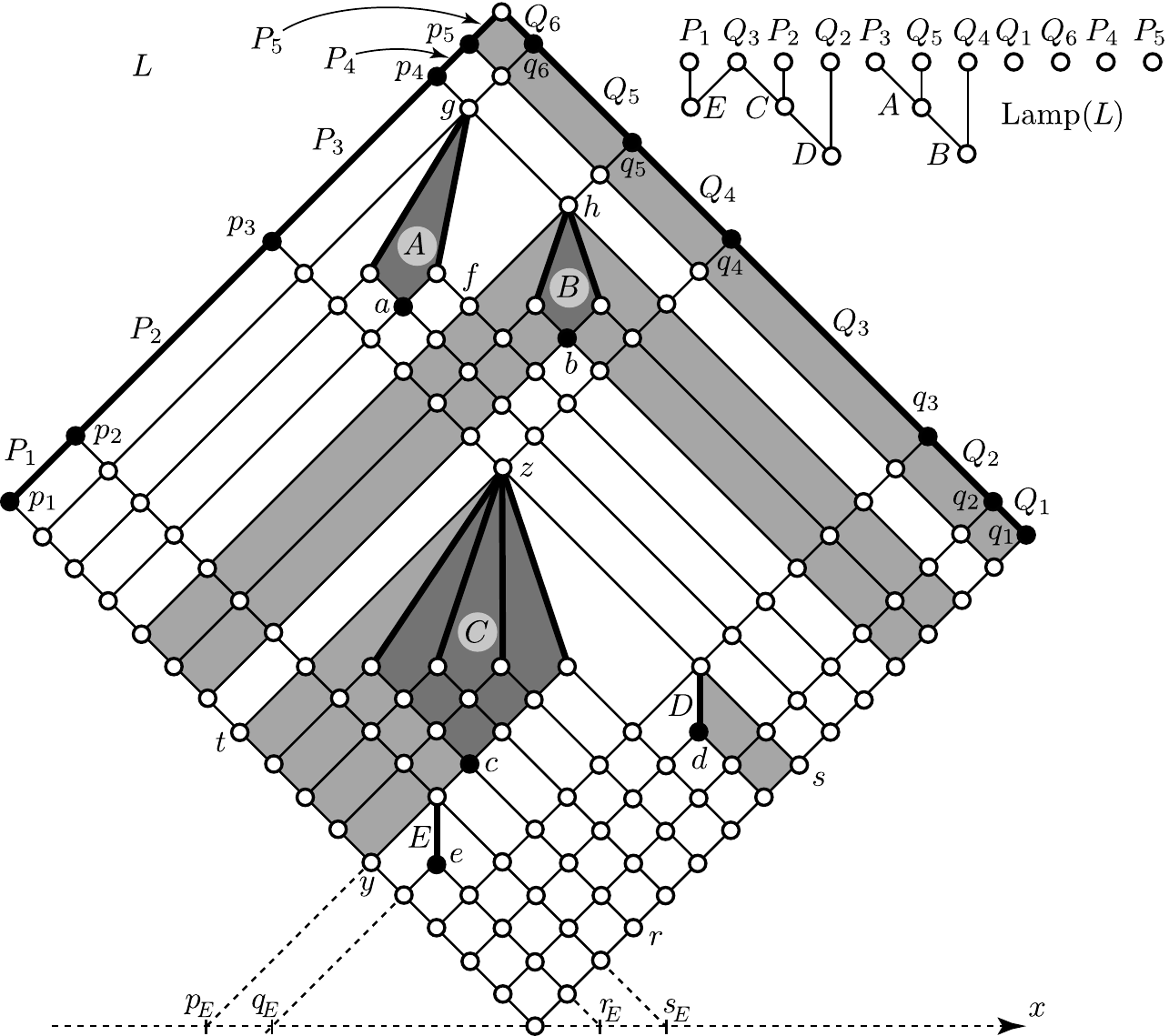}}
\caption{Lamps and related geometric objects}\label{F:lamps}
\end{figure}
For later reference, we recall by G.~Cz\'edli~\cite[Lemma 3.1]{gCa} that 
\begin{equation}
\text{A lamp is uniquely determined by its foot.}
\label{S1}
\end{equation}
The feet of our lamps are  black-filled in Figures~\ref{F:lamps}--\ref{figgsch}; 
this helps us find them.

In the real world, lamps emit light. Our lamps do it in a special way: 
the light rays go from all points of the neon tubes of a lamp~$I$  
downward with normal slopes. 
Next we give our definition of light emission. 
For an element $x \in L$, we define the line segment  $\LineL(x)$
from $x$ left and down, of normal slope 
to the lower-right boundary of $L$. 
Similarly, to the right, we have $\LineR(x)$.

So for a lamp $I$, we have the four line segments, 
from $\Peak I$ and $\Foot I$, left and right.
We denote them by $\LRoof I$ (the left roof), $\RRoof I$ (the right roof), 
$\LFloor I$, (the left floor)  and $\RFloor I$ (the right floor).

\begin{definition}[G.~Cz\'edli~\cite{gCa}]
For a lamp $I$ of a slim rectangular lattice $L$, we define
\begin{enumeratei}

\item \emph{the area left lit by $I$} (or, as in \cite{gCa}, \emph{illuminated from the right by $I$}), denoted by $\LLit I$, is a quadrangle bounded by
the line segments  $\LineL(\Peak{I})$, $\LineL(\Foot{I})$, 
the upper right boundary of $I$, and the appropriate line segment 
of the lower left boundary of $L$.
\label{D:lit1}
\item \emph{the area right lit by $I$}, denoted by $\RLit I$, is defined symmetrically.
\label{D:lit2}
\item \emph{the area lit by $I$}, denoted by $\Lit I$ is defined as $\LLit I\cup\RLit I$.
The geometric (topological) interior of $\Lit L$ is denoted by $\OLit L$ 
and we call it the \emph{open lit set} of $I$.
\label{D:lit3}
\end{enumeratei}
\end{definition}

For example, in Figure~\ref{F:lamps}, $\LLit C$, $\RLit D$, and $\Lit B$ are shaded.

It follows from the statements (2.10) and (2.11) of G.~Cz\'edli~\cite{gCa} that, 
for every lamp $I$ of $L$,
\begin{equation}
\parbox{8.6cm}{the geometric (that is, topological) boundaries of the\\ 
areas $\Lit I$, $\LLit I$, and $\RLit I$ consist of edges.}
\label{S:boundaries}
\end{equation}

Utilizing the concept of lit sets, we define some relations on $\Lamp L$; 
G.~Cz\'edli~\cite[Definition 2.9]{gCa} defines eight relations 
but here we only need four.

\begin{definition}[G.~Cz\'edli~\cite{gCa}]\label{deffRlsTs}
Let  $L$ be a slim rectangular lattice.
We define four relations $\rhgeomb$, $\rhfoot$, $\rhinfoot$, and $\rhalg$ 
on the set $\Lamps L$, 
by the following rules. For $I, J \in \Lamps L$,
\begin{enumeratei}
\item $(I ,J) \in \rhgeomb$ if  $I\neq J$, $\body I\subseteq \Lit J$,
and $I$ is an internal lamp;
\item $(I ,J) \in \rhfoot$ if $I \neq J$, $\Foot I \in \Lit J$, and $I$ is an internal lamp;
\item $(I ,J) \in \rhinfoot$ if  $I\neq J$, $\Foot I \in \OLit J$, and $I$ is an internal lamp;
\item $(I ,J) \in\rhalg$  if  $\Peak I\leq \Peak J$, $\Foot I \not\nleq\foot J$,
and $I$ is an internal lamp.
\end{enumeratei}
\end{definition}

The significance of lamps becomes clear from the following statement, which is a part of the (Main) Lemma 2.11 of G.~Cz\'edli~\cite{gCa}.

\begin{lemma}[G.~Cz\'edli~\cite{gCa}]\label{L:lamp}
Let $L$ be a slim rectangular lattice. Then $\rhgeomb = \rhfoot = \rhinfoot = \rhalg$.
Let $\brho$ stand for any one \lp or all\rp{} of these relations and 
let $\leq$ be the reflexive transitive closure of $\brho$.
Then $(\Lamp L, \leq)$ is an ordered set and it is isomorphic to $\Jir{\Con L}$.  
Also, if $I,J \in \Lamp L$ and $I\prec J$ in $(\Lamp L, \leq)$, 
then $\pair I J \in \brho$.
\end{lemma}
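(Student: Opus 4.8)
The plan is to prove the four assertions in turn: the coincidence of the four relations, the order axioms for $\le$, the description of its covers, and finally the isomorphism with $\Jir{\Con L}$, which I expect to be the crux. For the equality $\rhgeomb=\rhfoot=\rhinfoot=\rhalg$ I would start with the two cheap containments: since $\Foot I$ is a vertex of the region $\body I$, the hypothesis $\body I\subseteq\Lit J$ of $\rhgeomb$ gives $\Foot I\in\Lit J$, so $\rhgeomb\subseteq\rhfoot$; and $\OLit J\subseteq\Lit J$ gives $\rhinfoot\subseteq\rhfoot$. The substantive inclusions go the other way. To prove $\rhfoot\subseteq\rhgeomb$ I would work in a fixed $\cdia$-diagram (available by Theorem~\ref{T:well}): the light of $J$ is emitted downward along normal slopes, so $\Lit J$ behaves monotonically with respect to those rays, while $\body I$ sits above its foot between two normal-slope floor segments; using this monotonicity together with the fact that the boundary of $\Lit J$ consists of edges (statement~\eqref{S:boundaries}), one propagates $\Foot I\in\Lit J$ to all of $\body I$. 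For $\rhfoot\subseteq\rhinfoot$ I would exclude the degenerate case that $\Foot I$ lies on the topological boundary of $\Lit J$: that boundary is made of normal-slope edges, whereas the neon tube over $\Foot I$ is steep because $I$ is internal, and this incompatibility (for $I\neq J$) forces $\Foot I\in\OLit J$. Finally $\rhfoot=\rhalg$ is the translation of illumination into the lattice order: using the order-theoretic description of the lit set that the $\cdia$-diagram provides, namely that $x\in\Lit J$ exactly when $x\le\Peak J$ and $x\nleq\foot J$, one gets $\rhalg\Rightarrow\rhfoot$ from $\Foot I\le\Peak I\le\Peak J$, and for $\rhfoot\Rightarrow\rhalg$ one first upgrades to $\rhgeomb$ and then reads off $\Peak I\in\body I\subseteq\Lit J$, hence $\Peak I\le\Peak J$, and $\Foot I\in\Lit J$, hence $\Foot I\nleq\foot J$. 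Thus all four relations coincide, and I may freely take $\brho=\rhalg$.

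Reflexivity and transitivity of $\le$ hold by its definition as a reflexive transitive closure, so only antisymmetry needs work, and I would obtain it from a strict monotonicity of peaks. Every pair $\pair I J\in\brho$ satisfies $\Peak I\le\Peak J$ with $I$ internal. Suppose $I\le J$ and $J\le I$ with $I\neq J$; then $I$ and $J$ lie on a closed $\brho$-walk. Each vertex of this walk occurs as the first entry of some $\brho$-step, hence is internal, and the peaks are weakly increasing around a closed walk, hence constant. But by the construction of internal lamps (the peak $q$ determines the foot $\beta_q$, and therefore the lamp), two distinct internal lamps cannot share a peak; so all vertices of the walk coincide, contradicting $I\neq J$. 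Hence $\le$ is antisymmetric and $(\Lamp L,\le)$ is an ordered set. For the covers, suppose $I\prec J$. As $I<J$ there is a $\brho$-walk from $I$ to $J$; choose a shortest one, $I=I_0,\dots,I_m=J$ with $m\ge1$, necessarily without repeated vertices. If $m\ge 2$ then $I_1$ satisfies $I\mathrel{\brho}I_1$ and lies below $J$ along the remaining steps, so $I\neq I_1\neq J$ gives $I<I_1<J$ by antisymmetry, contradicting $I\prec J$. Therefore $m=1$ and $\pair I J\in\brho$, as required.

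It remains to produce the order isomorphism $(\Lamp L,\le)\cong\Jir{\Con L}$, which I expect to be the main obstacle. The natural candidate is the map $I\mapsto\con{\inn_I}$ sending a lamp to the congruence generated by its neon tube (equivalently, by the edge $[\Foot I,\Peak I]$). I would establish: (a) each $\con{\inn_I}$ is join-irreducible and every join-irreducible congruence arises this way, using the structure theory of congruences of SPS lattices, where join-irreducible congruences correspond to trajectories of edges and each trajectory meets the neon tube of exactly one lamp; (b) injectivity, i.e. $\con{\inn_I}\neq\con{\inn_J}$ whenever $I\neq J$; and (c) that the map is an order isomorphism, i.e. $I\le J$ iff $\con{\inn_I}\subseteq\con{\inn_J}$. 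For (c) the forward direction reduces, by transitivity, to showing that a single step $\pair I J\in\brho$ forces $\con{\inn_I}\subseteq\con{\inn_J}$; concretely, that the light $J$ casts on $\Foot I$ is exactly the spreading of $\con{\inn_J}$ down to the neon tube of $I$, so that \emph{illumination} coincides with \emph{congruence propagation}. The reverse direction of (c) I would argue contrapositively: if $J$ never transitively illuminates $I$, then a prime-projectivity (trajectory) analysis shows $\con{\inn_J}$ does not collapse $[\Foot I,\Peak I]$. The hard analytic heart of the whole lemma is precisely this identification of illumination with congruence generation, for which the external tools of \cite{gCa} (multifork extensions and the behaviour of congruences along normal-slope rays) must carry the load; everything preceding it is geometry of the $\cdia$-diagram and order-theoretic bookkeeping.
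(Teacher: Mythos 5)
There is an important mismatch of expectations here: the paper does \emph{not} prove this lemma at all. It is imported verbatim as part of the (Main) Lemma 2.11 of \cite{gCa}; the only thing the paper adds is the remark that the isomorphism is witnessed by the map $I \mapsto \con{\Foot I, \Peak I}$. So there is no internal proof to compare yours against, and the question is whether your sketch would stand on its own. Its purely order-theoretic superstructure does: antisymmetry via weak monotonicity of peaks along $\rhalg$-steps combined with the fact that an internal lamp is determined by its peak, and the cover description via a shortest $\brho$-walk with no repeated vertices, are correct and complete arguments (and are the routine part of the cited result).

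The genuine gaps are in the two substantive parts, which you assert rather than prove. First, your route to $\rhfoot=\rhalg$ invokes the characterization ``$x\in\Lit J$ iff $x\le\Peak J$ and $x\nleq\Foot J$''; applied to $x=\Foot I$ this \emph{is} the equality $\rhfoot=\rhalg$ being proved, so the argument is circular unless that order-theoretic description of lit sets is established independently --- in \cite{gCa} this takes real work with the coordinate system of \cite{gC1}. Second, your argument for $\rhfoot\subseteq\rhinfoot$ has a concrete hole: the topological boundary of $\Lit J$ is not made only of normal-slope edges (it contains the steep upper sides of $\body J$ and pieces of the lower boundary of $L$), and the premise that ``the neon tube over $\Foot I$ is steep'' fails for internal lamps with more than one neon tube, since then the edges at $\Foot I$ along the lower sides of $\body I$ have normal slopes; excluding that $\Foot I$ lies on the boundary needs facts such as \cite[Lemma 3.8]{gCa}, which you do not supply. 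Third, and most importantly, the isomorphism $(\Lamp L,\le)\cong\Jir{\Con L}$ --- which you yourself identify as the crux --- is not proven but delegated to ``the external tools of \cite{gCa}.'' That identification of illumination with congruence propagation is precisely the content of the result being cited, so deferring it means the proposal establishes strictly less than the lemma. In short: your plan (including the candidate isomorphism $I\mapsto\con{\Foot I,\Peak I}$) matches the architecture of Cz\'edli's actual proof, but as a self-contained argument it has a genuine gap at its center.
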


This lemma is illustrated by Figure~\ref{F:lamps}.
The isomorphism 
\[
   \Lamp L \iso \Jir{\Con L}
\]
is witnessed by the map
\[
\Lamp L \to \Jir{\Con L},\text{ defined by }
   I \mapsto \con {\Foot I,  \Peak I}. 
\]

We also need the following statement.

\begin{lemma}[G.~Gr\"atzer  and E.~Knapp~\cite{GKn09}]\label{L:recext}
If $K$ is a slim planar semimodular lattice with at least three elements, then there exists a slim rectangular lattice $L$ such that $\Con K \cong \Con L$.
\end{lemma}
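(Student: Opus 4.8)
The plan is to produce $L$ as a congruence-preserving extension of $K$: if I can build a slim rectangular lattice $L \supseteq K$ for which the restriction map $\Con L \to \Con K$ is a bijection, then it is automatically a lattice isomorphism, and $\Con K \cong \Con L$ follows at once. Thus the whole problem reduces to constructing such an $L$ and verifying congruence-preservation.

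First I would fix a planar diagram of $K$ and observe that $K$ fails to be rectangular exactly when its left or right boundary does not run straight up to a single doubly irreducible corner and straight down from it---that is, when a boundary chain ``turns inward'' instead of forming the two adjacent sides of a geometric rectangle. I would then straighten the boundary by a finite sequence of local extensions: at each inward turn, glue a covering square (a $\SC 2\times\SC 2$ cell) onto the offending boundary edge, pushing the boundary outward. Each glued square is attached with exactly one edge on the current boundary, so its opposite edge---the new piece of boundary---is perspective across that square to an edge already present. After finitely many such steps the left and right boundaries become the four sides of $\fullrect L$ meeting in a unique complementary pair of corners; the result $L$ is slim (only covering squares were added, so no $\SM 3$ appears), planar, and semimodular by construction, hence slim rectangular.

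Congruence-preservation would be established through perspectivity bookkeeping. Since every new prime interval is perspective to one of $\Edges K$, the trajectory of $L$ through any new edge meets $K$; consequently each trajectory of $L$ carries a nonempty trace on $K$, and---provided the gluings are arranged so that no two formerly unrelated trajectories of $K$ are forced together and no trajectory made entirely of new edges is created---``trajectory of $L\mapsto$ its trace on $K$'' is a bijection between trajectories, hence between $\Jir{\Con L}$ and $\Jir{\Con K}$, which moreover respects the forcing order. This is exactly where the main obstacle lies: one must verify that a single square-gluing at a boundary edge is genuinely a congruence-preserving step, and then that the whole straightening procedure terminates in one rectangle with a unique left and a unique right corner without ever merging two distinct trajectories of $K$ or spawning a new join-irreducible congruence. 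The local step is the heart of the matter; assembling the local steps into a terminating, globally consistent rectangularization is the delicate global part.
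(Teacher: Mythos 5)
Your overall strategy is the right one, but it is worth noting that the paper does not prove this lemma at all: it cites G.~Gr\"atzer and E.~Knapp~\cite{GKn09} (see also \cite{GS14}), whose stronger theorem---that $K$ has a \emph{congruence-preserving} extension to a slim rectangular lattice, built by straightening the boundary step by step---is exactly what you set out to prove. So the comparison must be made with that cited proof, and against it your proposal has a genuine gap: the local step you propose is the wrong one, and it fails precisely at the point you yourself call ``the heart of the matter.''

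Gluing a covering square onto a \emph{single} boundary edge $[a,b]$ means adjoining two new elements $c,d$ with $a \prec c \prec d$ and $b \prec d$ and nothing else above $d$. First, this is not even a lattice extension: the up-set of $d$ in the resulting poset is $\{d\}$, so $d \vee x$ fails to exist for every $x \in K$ incomparable with $d$ (already $d \vee 1_K$ fails when $b \neq 1_K$); repairing this forces further new elements above $d$ (semimodularity alone demands $e \prec d \vee e$ for every old cover $e$ of $b$, since $d \wedge e = b \prec d$), so the step is not local and nothing in your proposal controls the cascade. Second, and fatally for your congruence bookkeeping, the two side edges $[a,c]$ and $[b,d]$ of the glued square are perspective to each other and to \emph{no} edge of $K$; the trajectory through them consists entirely of new edges. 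This is exactly the configuration your own argument must exclude (``no trajectory made entirely of new edges''), and it is unavoidable for any square sharing only one edge with $K$: such a square always has two sides whose perspectivity class is disjoint from $\Edges K$, so either a new join-irreducible congruence appears or unintended collapses are forced, and in either case the restriction map $\Con L \to \Con K$ is not an isomorphism. The correct local step---Gr\"atzer and Knapp's corner addition---adjoins \emph{one} new doubly irreducible element $x$ into a concave turn $u \prec v \prec w$ of the boundary, so that the new $4$-cell $\{u,v,x,w\}$ shares \emph{two} edges with $K$: then $[u,x]$ is perspective to $[v,w]$ and $[x,w]$ to $[u,v]$, joins and meets with $x$ reduce to joins and meets with $w$ and $u$, slimness survives provided the turn is chosen so that no further element $y$ with $u \prec y \prec w$ already exists (otherwise $x$ would create an $\SM 3$), and each step is genuinely congruence-preserving. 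Your proposal cannot be rescued by ``arranging the gluings''; the one-edge gluing must be replaced by this two-edge corner insertion.
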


G.~Gr\"atzer  and E.~Knapp~\cite{GKn09} proved a stronger statement, 
which we do not require. See also G. Gr\"atzer and E.\,T. Schmidt~\cite{GS14}.

To verify the Three-pendant Three-crown Property,
we have to work in $\Jir{\Con L}$.
So by utilizing Lemmas~\ref{L:lamp} and \ref{L:recext}, 
we can confine ourselves to investigate lamps in slim rectangular lattices. 

\section{Further tools and the Key Lemma}\label{sectionproofs}

\subsection{Coordinate quadruples}\label{S:quadruples}

We start with some technical tools.

\begin{definition}\label{D:lampnotation}
Let $I$ be a lamp of a slim rectangular lattice $L$ with a fixed $\bdia$-diagram. 
Assume that we choose the coordinate system of the plane $\mathbb R^2$  
so that $(0, 0)$ is the zero of $L$. 
\begin{enumeratei}
\item\label{D:lampnotationa}
Following G.~Cz\'edli~\cite{gCa}, the \emph{lit set} $\Lit I$ of an internal lamp $I$ 
is bordered by the line segments $\LRoof I$ and $\RRoof I$, 
$\LFloor I$, and $\RFloor I$,  
and the appropriate segments on the lower boundary. 
If $I$ is a boundary lamp, the above-mentioned line segments still border $\Lit I$. 
Any proper line segment lies on a line referred to as its \emph{carrier line}.
\item\label{D:lampnotationb} Let $(p_I, 0)$, $(q_I, 0)$, $(r_I, 0)$ 
and $(s_I, 0) \in\mathbb R^2$ be the intersection points of the $x$-axis 
with the carrier lines of $\LRoof I$, $\LFloor I$, $\RFloor I$, and $\RRoof I$, respectively. 
Then $\tuple{p_I, q_I, r_I, s_I}$ is called the \emph{coordinate quadruple} 
of the lamp~$I$. 
\item\label{D:lampnotationc} Let $I, J \in \Lamps L$.  
Then $I$ is \emph{to the left} of $J$, in notation $I \rlambda J$, 
if $q_I \leq p_J$ and $s_I \leq r_J$.
\end{enumeratei}
\end{definition}

For example, in Figure~\ref{F:lamps}, $\LRoof C$, $\RRoof C$,
$\LFloor C$, and $\RFloor C$ are the line segments corresponding to
the intervals (in fact, chains) $[t,z]$,  $[s,z]$,  $[y,c]$, and $[r,c]$,  respectively.
The coordinate quadruple of the lamp $E$ is shown in Figure~\ref{F:lamps} and, for example,  $E \rlambda D$ and  $P_1 \rlambda C$; however, $P_2 \rlambda C$ and $A \rlambda B$ fail. 
For $I \in\Lamps L$, the following observation follow from the definitions.
\begin{equation}
\parbox{9.2cm}{$p_I<q_I<r_I<s_I$ if and only if $I$ is an internal lamp,
\\$p_I=q_I<r_I<s_I$ 
if and only if $I$ is a left boundary lamp,
\\$p_I<q_I<r_I=s_I$ if and only if $I$ is a right boundary lamp.}
\label{E:3lamps}
\end{equation}

\begin{remark}\label{remjcRdnSt} Apart from an order isomorphism,
$(-p_I,s_I)$ and $(-q_I,r_I)$ are the join-coordinates of $\Peak I$ and $\foot I$ as in Cz\'edli~\cite[Definition 4.2]{gC1}.
\end{remark}

\subsection{Key Lemma}\label{S:Key}
The proof of the Main Theorem is based on the following key result.

\begin{lemma}[Key Lemma]\label{L:K} 
Let $I$ and $I'$ be  lamps of a slim rectangular lattice~$L$ 
with a fixed $\bdia$-diagram. 
If $I \neq I'$ and they have a common lower cover 
in $\tuple{\Lamps L;\leq}$, 
then either $I$ is to the left of $I'$ or $I'$ is to the left of $I$.
\end{lemma}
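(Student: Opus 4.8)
The statement asserts a geometric trichotomy breaking into a dichotomy: two distinct lamps $I,I'$ that share a common lower cover $K$ in $\tuple{\Lamps L;\leq}$ must be $\lambda$-comparable. The natural strategy is to exploit Lemma~\ref{L:K}'s hypothesis through the relation $\rhfoot$ (or equivalently $\rhgeomb$, $\rhinfoot$, $\rhalg$, since Lemma~\ref{L:lamp} identifies them). Since $K \prec I$ and $K \prec I'$, Lemma~\ref{L:lamp} gives $\pair K I \in \brho$ and $\pair K I' \in \brho$; reading $\brho$ as $\rhfoot$ tells us $K$ is internal and $\Foot K$ lies in both $\Lit I$ and $\Lit{I'}$. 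The first thing I would do is translate everything into the coordinate quadruples of Definition~\ref{D:lampnotation}: I want to compare the horizontal intervals cut out on the $x$-axis by the carrier lines of the roofs and floors of $I$ and $I'$, and relate them to the position of $\Foot K$.

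\textbf{Key steps.} First I would fix the geometry: write $\tuple{p_I,q_I,r_I,s_I}$ and $\tuple{p_{I'},q_{I'},r_{I'},s_{I'}}$ for the two coordinate quadruples, and locate $\Foot K=(x_0,y_0)$. Because all roof and floor segments have normal slopes (slope $\pm 1$ in the $\bdia$-diagram), membership of a point in $\Lit I$ translates into simple linear inequalities comparing $x_0\pm y_0$ against the quadruple entries $p_I,q_I,r_I,s_I$. Concretely, $\Foot K\in\Lit I$ should say that $x_0+y_0$ lies between $p_I$ and $s_I$ (the two roof-carrier intercepts) and, on whichever side $\Foot K$ falls, the floor-carrier intercept ($q_I$ on the left, $r_I$ on the right) bounds it from the inside. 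The second step is to run this for both $I$ and $I'$ simultaneously, obtaining that the two lit regions $\Lit I$ and $\Lit{I'}$ share the point $\Foot K$. The third and decisive step is to argue that two lamps whose lit sets overlap at an internal foot cannot ``cross'': if neither $I\rlambda I'$ nor $I'\rlambda I$ held, then by Definition~\ref{D:lampnotation}\eqref{D:lampnotationc} we would have (say) $q_I>p_{I'}$ together with $r_{I'}>s_I$, or the mirror configuration, and I would show this forces the bodies or lit sets into a genuinely interleaved position incompatible with both containing $\Foot K$ while $K$ is a \emph{cover} of each.

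\textbf{The main obstacle.} The crux is the last step: converting the failure of both $\rlambda$-comparabilities into a contradiction. Overlapping lit sets alone do not obviously forbid crossing, so I expect to need the covering hypothesis in an essential way rather than mere comparability $K\le I$, $K\le I'$. The leverage should come from the last clause of Lemma~\ref{L:lamp}: a \emph{cover} relation forces $\pair K I\in\brho$ directly, not just through the transitive closure, so $\Foot K$ sits in the lit set in a tight, one-step fashion. I anticipate that the real work is showing that if $I$ and $I'$ were $\lambda$-incomparable, then one could interpolate a third lamp between $K$ and one of them --- built from the shared internal foot and the crossing geometry --- contradicting that $K$ is covered by $I$ (or by $I'$). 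Making this interpolation precise, and ruling out the degenerate boundary-lamp cases separately via \eqref{E:3lamps}, is where the delicate planar reasoning about the $\bdia$-diagram will concentrate; everything before it is bookkeeping with normal-slope intercepts.
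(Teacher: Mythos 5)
Your setup coincides with the paper's: by the last clause of Lemma~\ref{L:lamp}, the common lower cover $K$ satisfies $\pair K I,\pair K{I'}\in\rhinfoot$, so $\foot K\in\OLit I\cap\OLit{I'}$; in particular the open lit sets of $I$ and $I'$ intersect, and one then argues with the coordinate quadruples. Up to that point you are on track, modulo a bookkeeping slip: membership of a point $(x_0,y_0)$ in $\LLit I$ is a condition on $x_0-y_0$ relative to $p_I$ and $q_I$, while membership in $\RLit I$ is a condition on $x_0+y_0$ relative to $r_I$ and $s_I$; your version compares $x_0+y_0$ with $p_I$ and $s_I$, mixing the two families of parallel carrier lines.

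The genuine gap is the decisive step, which you leave as a plan, and the mechanism you propose for it would not work. You intend to show that a crossing configuration lets one \emph{interpolate a third lamp} between $K$ and one of $I,I'$, contradicting the covering. But lamps are not freely constructible geometric objects: $\Lamps L$ is determined by the meet-irreducible elements of $L$, so nothing guarantees that a lamp ``built from the shared internal foot and the crossing geometry'' exists in $L$. The paper uses the covering hypothesis only to secure two facts: (a) $\foot K$ lies in both \emph{open} lit sets (this is where the one-step relation from a cover is needed, as you correctly observe), and (b) the incomparability $I\parallel I'$ (if, say, $I<I'$, then $K\prec I'$ would fail). The contradiction from a crossing configuration is then obtained by two mechanisms absent from your sketch: first, the relation $\rhalg$ of Definition~\ref{deffRlsTs} --- in several configurations one reads off, via the geometric criterion for the lattice order in Czédli~\cite{gC1}, that $\Peak I\leq\Peak{I'}$ while $\foot I\nleq\foot{I'}$, i.e., $\pair I{I'}\in\rhalg$, contradicting $I\parallel I'$; second, planarity --- each internal lamp has ``shields'', which are single edges of normal slope ending at its peak, and since the boundary of a lit set consists of edges by \eqref{S:boundaries}, a crossing would force two edges of the diagram to cross. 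Your sketch also omits the degenerate-equality analysis that the paper must do before any of this (ruling out $q=q'$, $r=r'$ via Czédli's Lemma 3.8, and $p=p'$ via a separate two-case argument using distinctness of peaks of internal lamps). Without these ingredients, your third step is not a proof but a restatement of what needs to be proved.
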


\begin{proof} For later use, recall the following statement G.~Cz\'edli~\cite[Corollary 6.1]{gC1}.
\begin{equation}
\parbox{8.7cm}{For $u \neq v \in L$, 
the inequality $u < v$ holds if and only if the ordinate 
(that is, the vertical $y$-coordinate) 
of $u$ is less than that of $v$ 
and the geometric line through $u$ and $v$ is 
either steep or it has a normal slope.}
\label{S:later} 
\end{equation}

In the rest of this proof, assume that $I \neq I'$ are lamps of $L$ 
and they have a common lower cover $I''$ 
and so incomparable, in notation, $I\parallel I'$.
By  Lemma~\ref{L:lamp}, both $\pair {I''}I$ and $\pair {I''}{I'}$ belong to $\rhinfoot$, 
that is, $\foot {I''} \in \OLit{I}$ and $\foot {I''} \in \OLit{I'}$. Hence, 
\begin{equation}
\OLit I\cap \OLit {I'}\neq\es.
\label{E:olit}
\end{equation} 
As Figure~\ref{F:lamps} (for the lamp $E$) shows or alternatively, as Remark~\ref{remjcRdnSt} yields, 
\begin{equation}
\parbox{8.3cm}{$(p_I, s_I)$, $(q_I, r_I)$, and $\tuple{p_I, q_I, r_I, s_I}$ 
determine $\Peak I$, $\foot I$, and $I$, respectively;}
\label{S:determine}
\end{equation}
and similarly for $I'$. Since $I$ and $I'$ are distinct, it follows from \eqref{E:olit} that
\begin{equation}
\parbox{9cm}{At least one of $I$ and $I'$ is not a left boundary lamp. 
Similarly, at least one is not a right boundary lamp.}
\label{E:notboundary}
\end{equation}

To make the proof more readable, we write $\tuple{p, q, r, s}$ for
$\tuple{p_I, q_I, r_I, s_I}$ and $\tuple{p', q', r', s'}$ for
$\tuple{p_{I'}, q_{I'}, r_{I'}, s_{I'}}$.

Statement \eqref{E:notboundary}
and G.~Cz\'edli~\cite[Lemma~3.8]{gCa} yield that 
\begin{equation}
q\neq q'\quad\text{ and }\quad r\neq r'.
\label{E:notequal}
\end{equation}

We distinguish several cases.

\emph{Case 1}: Both $I$ and $I'$ are internal lamps. 

We need the following concept (which is based
on the concept of circumscribed rectangles 
by G.~Cz\'edli~\cite[Definition 2.6]{gCa}) 
as visualized by Figure~\ref{F:lamps}. 
For an internal lamp $J \in \Lamps L$, the \emph{left shield} 
and the \emph{right shield} of $J$ are the left upper side 
and the right upper side of the circumscribed rectangle of $J$. 
So these shields are line segments. 
Namely, it follows from (2.8), (2.10), (2.14), 
and Definition 2.6 of G.~Cz\'edli~\cite{gCa} 
(and from the fact that $\foot J$ is in the interior of the  circumscribed rectangle of $J$) 
that
\begin{equation}
\parbox{8.3cm}{the right shield of an internal lamp $J$ is an edge of normal slope and this edge is longer than the geometric distance of (the carrier lines of) $\LRoof J$ and $\LFloor J$. Analogously for the left shield of $J$.}
\label{L:Key}
\end{equation}
Based on \eqref{L:Key}, there is another way 
to define the shields of an internal lamp $J$:
the \emph{left shield} of $J$ is the unique edge of slope $\tuple{-1,-1}$ 
whose top is $\Peak J$;
the \emph{right shield} of $J$ has slope $(1, -1)$ and its top is $\Peak J$. 
For example, in Figure~\ref{F:lamps}, $[h,g]$ is the right shield of $A$ 
while $[f,h]$ and $[y,\Peak E]$ are the left shields of $B$ and $E$, respectively.

\begin{figure}[ht]
\centerline
{\includegraphics[scale=1.0]{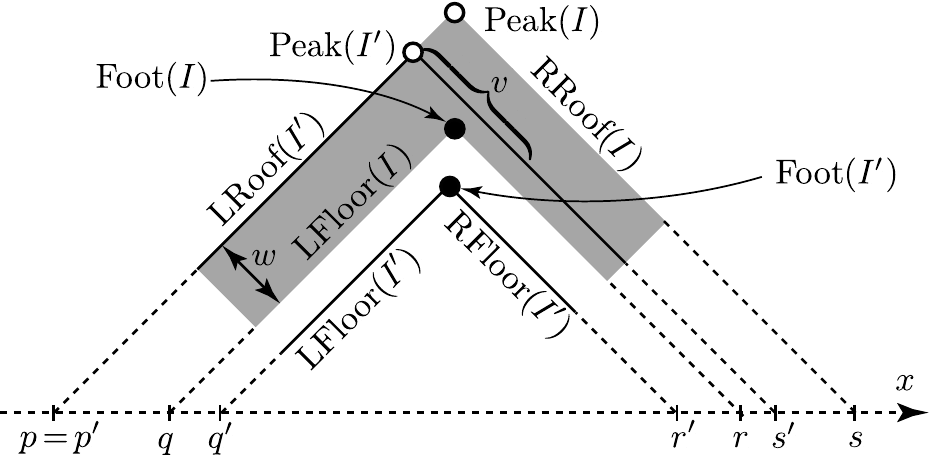}}
\caption{Proving \eqref{S:impl}}\label{figcsnkh}
\end{figure}

We know from (2.7) of G.~Cz\'edli~\cite{gCa} that distinct internal lamps have distinct peaks. This fact  along with \eqref{S1} and \eqref{S:determine} yield that 
\begin{equation}
\pair p s\neq \tuple{p',s'}\quad\text{ and }\quad \pair q r\neq\tuple{q', r'}.
\label{E:pairs}
\end{equation}
Next, we claim that 
\begin{equation}
p\neq p'.
\label{E:noteq}
\end{equation}

By way of contradiction, assume that $p = p'$. 
Since $q \neq q'$ by \eqref{E:notequal} 
and the role of $I$ and $I'$ is now  symmetric, 
we can assume that $q<q'$. 
Since  $p=p'$ and  \eqref{E:pairs} yield that $s\neq s'$, 
we conclude  that either $s<s'$ or $s'<s$. 

\emph{Case 1A}: $s'<s$. 
The situation (apart from the position of~ $r'$) 
is illustrated by Figure~\ref{figcsnkh}, where $\Lit I$ is the grey area and $\Lit{I'}$
is given by its boundary line segments $\LRoof{I'}$, $\LFloor{I'}$, etc.
The figure indicates the length
$v$ of the right shield of $I'$, 
which is greater than the ``width'' $(q'-p')/\sqrt 2$ of $\LLit {I'}$ by \eqref{L:Key}, 
and the ``width'' $w=(q-p)/\sqrt 2=(q-p')/\sqrt 2$ of $\LLit I$. 
By  \eqref{S:boundaries}, the geometric boundary of $\LLit I$ 
consists of edges (but these are not indicated in the figure 
between $\foot I$ and $\Peak I$). 
Since $v>w$, the geometric boundary of $\LLit I$ (consisting of edges) 
crosses the right shield of $I'$.
But this contradicts the planarity of the diagram 
since this right shield is an edge by \eqref{L:Key}.

\begin{figure}[ht]
\centerline
{\includegraphics[scale=1.0]{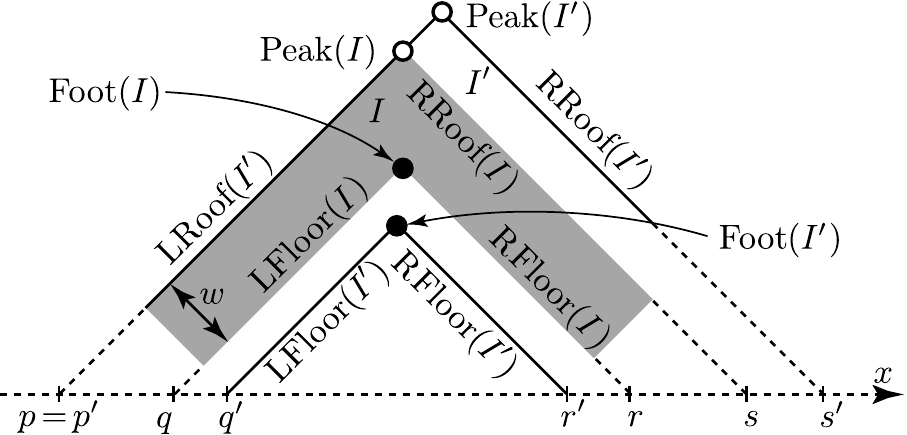}}
\caption{Still proving \eqref{S:impl}}\label{I:still}
\end{figure}
\emph{Case 1B}:  $s<s'$. This case is illustrated by Figure~\ref{I:still} 
(in which additional conditions hold, such as, $r' < r$).  
In this subcase, $q<q'$ yields that $\foot I$, 
which is on a line with point $\pair q 0$ and of slope $(1, 1)$ 
is above the carrier line of $\LFloor{I'}$.
Hence, it is clear  by the figure and, mainly by \eqref{S:later}, 
that $\Peak I\leq \Peak {I'}$ but $\foot I\nleq\foot{I'}$. Thus, $\tuple{I,I'} \in\rhalg$, contradicting that $I\parallel I'$. This 
completes \emph{Case 1B} and proves \eqref{E:noteq}.

\begin{figure}[ht]
\centerline
{\includegraphics[scale=1.0]{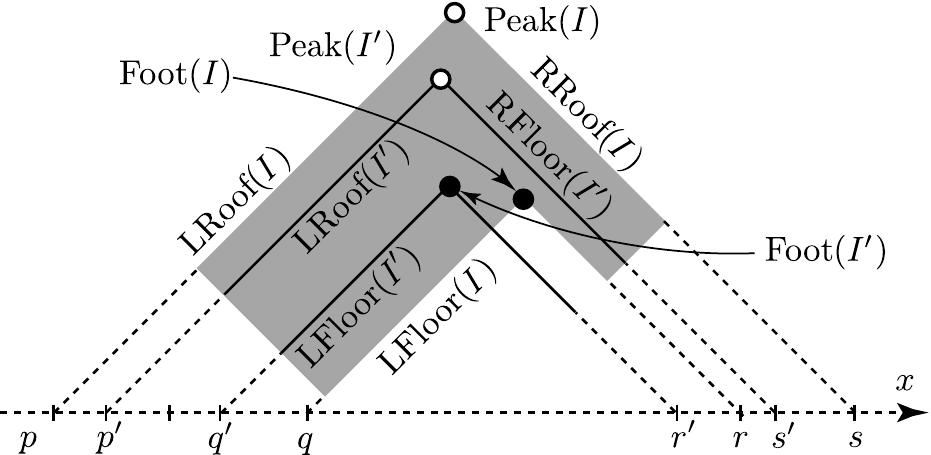}}
\caption{Case $p<p'<q$}\label{I:casep}
\end{figure}

Next, we are going to show that 
\begin{equation}
\text{if $p\leq p'$, then $q\leq p'$.}
\label{S:impl}
\end{equation}

So assume that $p\leq p'$. 
Then we know
from \eqref{E:noteq} that $p<p'$. 
By way of contradiction, assume that \eqref{S:impl} fails, that is, $p'<q$. 
Then neither $q'=q$, nor $q'>q$ by Lemma 3.8 of G.~Cz\'edli~\cite{gCa}. 
So $p<p'<q'<q$, see Figure~\ref{I:casep}. 
Observe that the geometric boundary of $\LLit {I'}$ 
cannot cross the right shield of~$I$ by \eqref{S:boundaries} and \eqref{L:Key}. 
So we obtain from \eqref{S:later} that $\Peak {I'}\leq \Peak I$. 
Note that $\foot {I'}$ is on the carrier line of $\LFloor {I'}$, 
which goes through the point $\pair{q'}{0}$; moreover, $q'<q$.
Therefore, \eqref{S:later} also yields that $\foot{I'}\nleq \foot I$. 
Hence, $\pair {I'}I \in\rhalg$, contradicting that $I\parallel I'$. 
This contradicts that $p'<q$ and so proves the validity of  \eqref{S:impl}.

As a variant of \eqref{S:impl}, observe that 
\begin{equation}
\text{if $s'\leq s$, then $s'\leq r$. Also, if $s\leq s'$, then $s\leq r'$.}
\label{S:also}
\end{equation}
Indeed, the first part of \eqref{S:also} follows from \eqref{S:impl} by left-right symmetry while its second part follows from the first part by interchanging the role of $I$ and $I'$.

\begin{figure}[ht]
\centerline
{\includegraphics[scale=1.0]{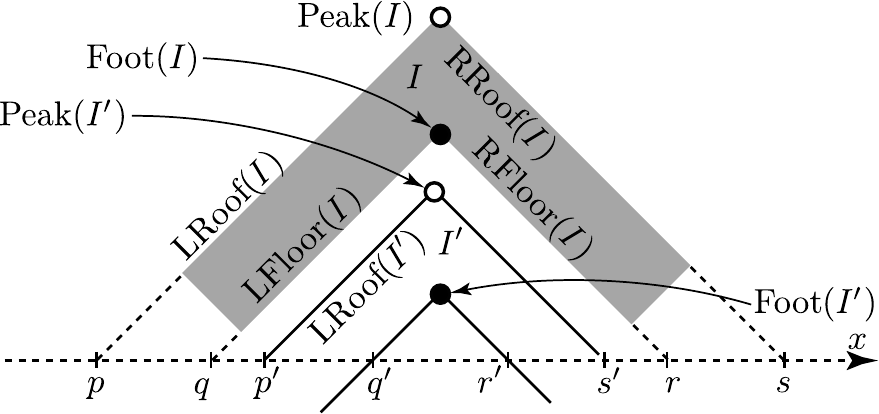}}
\caption{If $p<p'$ and $s'<s$}\label{pp}
\end{figure}

Next, we claim that  
\begin{equation}
\text{if }\, p\leq p',\,\text{ then }\, I\rlambda I'.
\label{E:pp'}
\end{equation}

So assume that $p\leq p'$. By  \eqref{S:impl}, we have that $q\leq p'$. 
We claim that $s\leq s'$. Assume to the contrary, that $s'<s$. 
By  \eqref{S:also}, $s'\leq r$; see Figure~\ref{pp}.  
Hence,  $\OLit I\cap \OLit {I'}=\es$, contradicting \eqref{E:olit}. 
This shows that $s\leq s'$. 
Applying \eqref{S:also}, we obtain that $s\leq r'$. 
Now, as part \eqref{D:lampnotationc} of  Definition~\ref{D:lampnotation} shows,  
$q\leq p'$ and  $s\leq r'$ complete the argument proving \eqref{E:pp'}.

Since  $I$ and $I'$ play a symmetric role,  we can assume that $p\leq p'$. Thus, \eqref{E:pp'} yields the validity of the lemma for  the case of internal lamps.

\begin{figure}[ht]
\centerline
{ \includegraphics[scale=1.0]{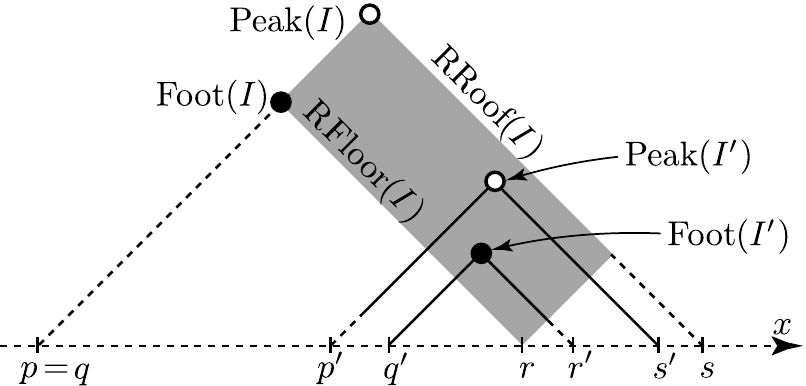}}
\caption{$I$ is a left boundary lamp and $r<r'<s$}\label{I:leftlamp}
\end{figure}

\emph{Case 2}: Of the two lamps, $I$ and $I'$, one is a boundary lamp 
and the other one is internal. 
By symmetry, we can assume that $I$ is a left boundary lamp and $I'$ is an internal lamp. By \eqref{E:3lamps}, $p=q$. Since this is clearly the least possible value, $q\leq p'$. Hence, to show that  $I\rlambda I'$, we need to show that $s\leq r'$. 

Suppose, for a contradiction, that $r'<s$. If $r' < r$, 
then we also have that $s'\leq r$; otherwise,
$\RFloor I$ would cross the left shield of $I'$ (see Figure~\ref{pp} 
after collapsing $p$ and $q$).
So if $r'< r$, then $s'\leq r$, but then  $\OLit I\cap \OLit {I'}=\es$ (similarly to Figure~\ref{pp}  but now $p=q$ and $\LLit I$ reduces to a line segment), and this equality contradicts   \eqref{E:olit}. This rules out that $r' < r$. Since $r'=r$ is also ruled out by
Lemma 3.8 of G.~Cz\'edli~\cite{gCa}, we have that  $r<r'$. 

So we have that $r<r'<s$; see Figure~\ref{I:leftlamp}.
Combining \eqref{S:later} and $r<r'$, we obtain that $\foot{I'}\nleq \foot I$. 
Thus $\Peak {I'}\nleq \Peak{I}$; 
indeed, otherwise we would have that $\pair {I'}I \in\rhalg$ 
and so $I'\leq I$ would contradict $I\parallel I'$. 
Since $s'\leq s$ (together with the trivial $p\leq p'$) 
would imply that $\Peak {I'}\leq \Peak{I}$, 
which has just been excluded, we obtain that, 
as opposed to what  Figure~\ref{I:leftlamp} shows, $s<s'$. 
However, then $r'<s<s'$ and $\RRoof I$ crosses the left shield of $I'$, 
which contradicts \eqref{S:boundaries}, \eqref{L:Key}, and the planarity of $L$. 
We have shown that $I\rlambda I'$, as required.

\emph{Case 3:} Both $I$ and $I'$ are boundary lamps. 
If they both were left boundary lamps, then $\OLit I\cap \OLit {I'}$ would 
contradict \eqref{E:olit}. 
We would have the same contradiction if both were right boundary lamps. Hence one of them, say $I$, is a left boundary lamp while the other, $I'$, is a right boundary lamp, 
and the required $I\rlambda I'$ trivially holds. 
This completes the proof of Lemma~\ref{L:K}. 
\end{proof}

\begin{figure}[ht]
\centerline
{ \includegraphics[scale=1.0]{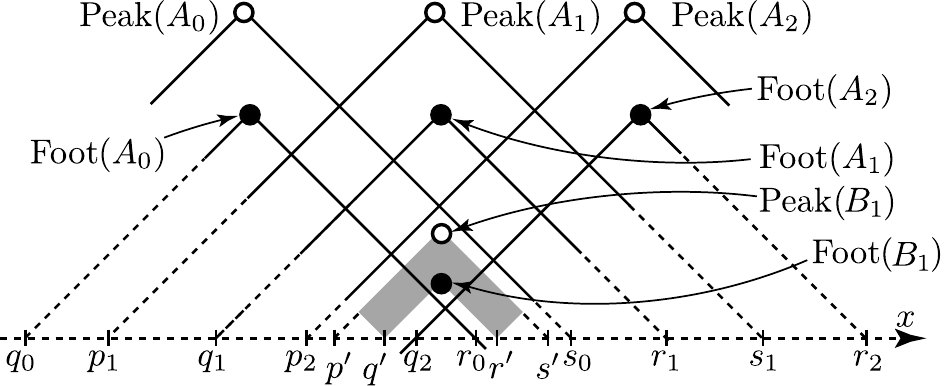}}
\caption{$A_0$, $A_1$, $A_2$, and $B_1$}\label{figkrtl}
\end{figure}

\section{Proving the Main Theorem}\label{S: MainTheorem}

Now we are ready to prove our main result.

\begin{proof}[Proof of the Main Theorem] The theorem is trivial for lattices with less than three elements. 
Hence, by Lemma~\ref{L:recext}, 
it suffices to prove the theorem for slim rectangular lattices. 
By way of contradiction, assume that $L$ is a slim rectangular lattice 
that fails the 3P3C-property. 
Then by Lemma~\ref{L:lamp}, $R_3$ is a cover-preserving ordered subset of $\Lamps L$. 
Let $X_i$ be the lamp corresponding to $x_i \in R_3$.
It follows from Lemma~\ref{L:K} that for any $i\neq j \in\set{0,1,2}$,
either $A_i$ is to the left of $A_j$ (in notation, $A_i\rlambda A_j$), 
or $A_j\rlambda A_i$. 
Therefore, since any permutation of $\set{A_0,A_1,A_2}$ 
extends to an automorphism of $R_3$, 
we can assume that $A_0\rlambda A_1$ and $A_1\rlambda A_2$; 
see Figure~\ref{figkrtl}, where the coordinate quadruple of $A_i$ 
is $\tuple{p_i,q_i,r_i,s_i}$. 
By Definition~\ref{D:lampnotation}\eqref{D:lampnotationc}, 
it follows that 
\begin{equation}
q_0\leq p_1,\,\,s_0\leq r_1,\,\,q_1\leq p_2,\,\,s_1\leq r_2,
\text{ and }p_i\leq q_i\leq r_i\leq s_i
\label{E:5ineq}
\end{equation}
for every $i \in\set{0,1,2}$. 
Note that $A_0$ is either an internal lamp such as in the figure, 
or it is a left boundary lamp and then $\LLit{A_0}$ is only a line segment, 
and analogously for $A_2$. 
Let $\tuple{p',q',r',s'}$ denote the coordinate tuple of $B_1$; 
note that $\Lit{B_1}$ is grey in the figure.  It follows from \eqref{E:5ineq} 
and from trivial properties of $\bdia$-diagrams that $\OLit{A_1}\cap\OLit{B_1} = \es$. 
On the other hand, $C_1\prec A_1$ and $C_1\prec B_1$ give that 
$\pair{C_1}{A_1} \in\rhinfoot$ and $\pair{C_1}{B_1} \in\rhinfoot$ by Lemma~\ref{L:lamp}.
It follows that $\foot{C_1} \in \OLit{A_1}\cap\OLit{B_1}=\es$, which is a contradiction, 
completing the proof of the Main Theorem.
\end{proof}

\section{Rectangular and patch lattices}\label{S:Rectangular}
Let  $(A,\rho)$ and $(A',\rho')$ be ordered sets. Their \emph{cardinal sum} will be denoted by 
$(A,\rho)\cardplus (A',\rho')$; 
it is $(A\sqcup A',\rho\sqcup\rho')$ where 
$\sqcup$
 stands for disjoint union.  
The operation $\gls$ for \emph{glued sum} was defined at the beginning of Section~\ref{S:notation}.

\subsection{Proving Theorem~\ref{T:patch}}\label{S:xx}

\begin{figure}[b!]
\centerline
{ \includegraphics[scale=0.9]{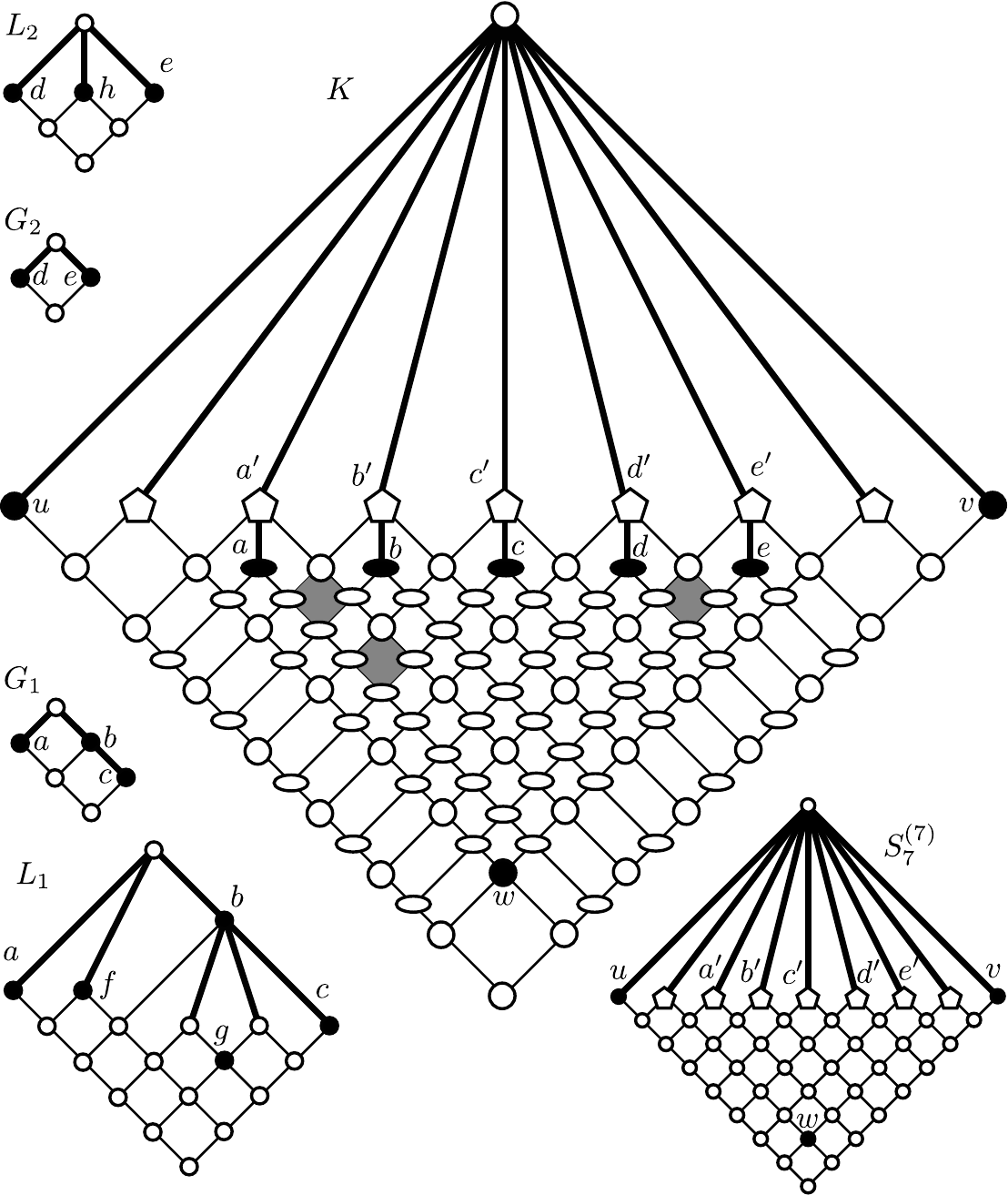}}
\caption{Constructing $K$ from $L_1$ and $L_2$}\label{figbbsp}
\end{figure}

By Lemma~\ref{L:recext}, we can assume that $L_1$, \dots, $L_n$ 
are slim rectangular lattices. 

First, we deal with \eqref{patch1}. It follows from G.\ Birkhoff's classical Representation Theorem of Finite Distributive Lattices, see, for example, G.~Gr\"atzer ~\cite[Theorem 107]{LTF}, 
that it suffices to find a slim planar semimodular lattice $H$ such that
\begin{equation}
\Jir H\cong \Jir{L_1}\cardplus\cdots\cardplus \Jir{L_n};
\label{disjunion}
\end{equation}
Let $H=L_1\gls \cdots\gls L_n$. 
Observe the each edge of $H$ is an edge of a unique  summand. 
For $i \in\set{1,,\dots,n}$, let $ \inp_i$ be an edge of $L_i$. 
This easily implies that, for $i,j \in\set{1,\dots,n}$, $\con{\inp_i}$ 
does not collapse $ \inp_j$ provided that $i\neq j$. 
The most convenient way to see this is by applying 
the Swing Lemma from G.~Gr\"atzer ~\cite{gG15}; 
see also  G.~Cz\'edli, G.~Gr\"atzer, and Lakser~\cite{CGL18} and 
G.~Cz\'edli and  G. Makay~\cite{CM17}. 
This implies \eqref{disjunion} and proves the \eqref{patch1}-part of the theorem.

\begin{figure}[t!]
\centerline
{ \includegraphics[scale=0.85]{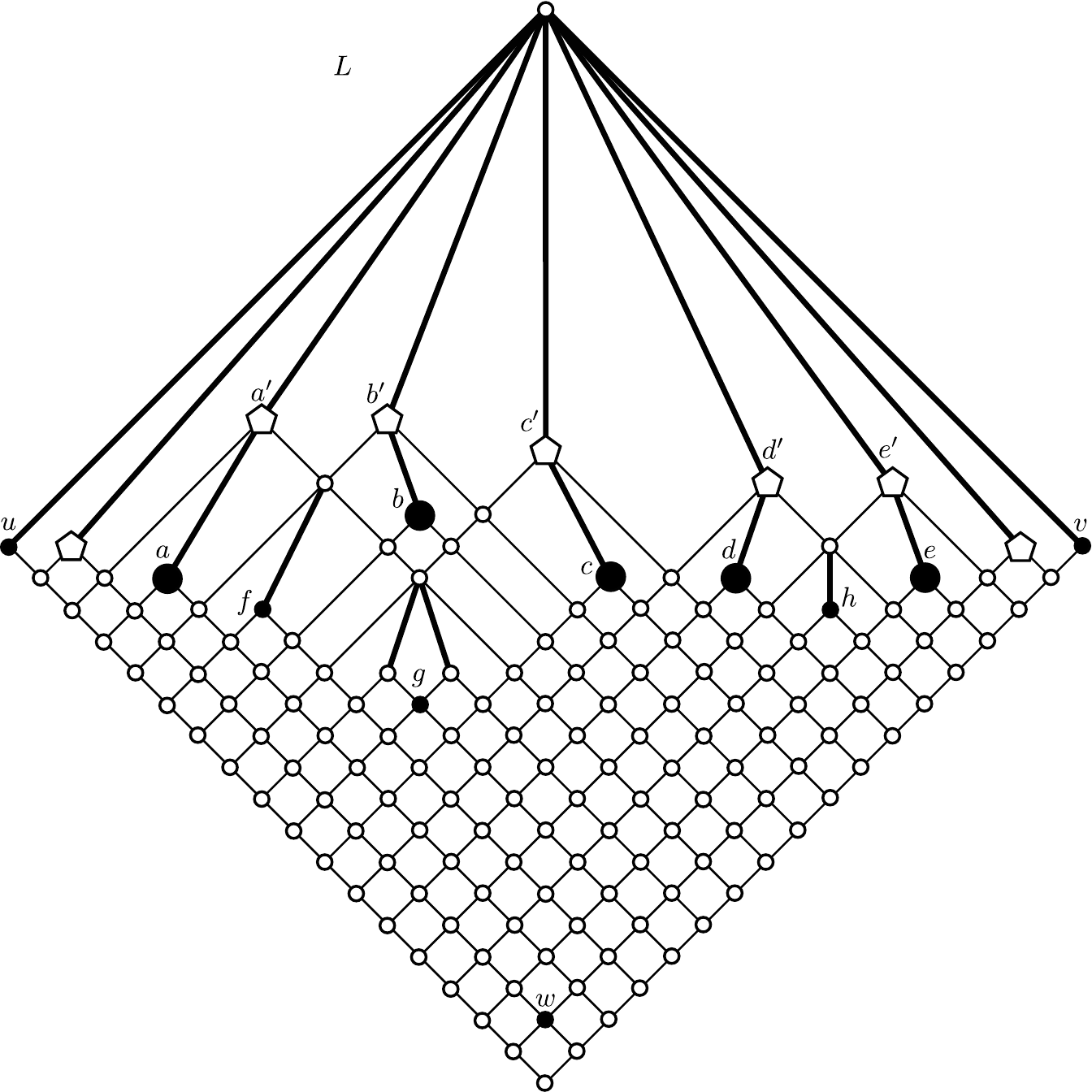}}
\caption{Constructing $L$ from $K$, $L_1$, and $L_2$}\label{figgsch}
\end{figure}

To prove \eqref{patch2}, we assume some familiarity with the multifork extensions of G.~Cz\'edli~\cite{gC14}. 
Recall that the \emph{grid} of the slim rectangular lattice $L_i$ 
is its sublattice generated by the upper boundary of $L_i$. 
This grid will be denoted by $G_i$; 
it is a distributive lattice with all if its edges of normal slopes.

For $i \in\set{1,\dots, n}$, let $t_i$ be the number of boundary lamps of $L_i$, and let $t=t_1+\cdots+t_n+2$. 
We start our  construction by taking $\Sn t$; see Figure~\ref{figbbsp}, 
where $n=2$, $t_1=3$, $t_2=2$, and $t=7$. 
The feet of the lamps are black-filled in the figure. 
Also, the feet of the internal neon tubes of $\Sn 7$ are pentagons.

Let $U$ and $V$ be the left boundary lamp and the right boundary lamp, respectively, 
of $\Sn 7$, and let $W$ be its unique internal lamp. 
In Figure~\ref{figbbsp}, the bottom of a lamp denoted by  a capital letter is denoted by the corresponding lower-case letter. 

Let $A, B, C, \dots$ be the list of $(t - 2)$ boundary lamps consisting, in this order, 
of the left boundary lamps of $L_1$, the right boundary lamps of $L_1$, the 
left boundary lamps of $L_2$, the right boundary lamps of $L_2$, \dots,
the left boundary lamps of~$L_n$, and the right boundary lamps of~$L_n$. 
Disregarding the leftmost one and the rightmost one, 
we label the feet of the neon tubes of $W$ by $a'$, $b'$, $c'$, \dots, 
from left to right, in this order.

By  G.~Cz\'edli~\cite[Proposition 3.3]{gC14},  $\Sn t$ is a slim patch lattice 
All the elements $a'$, $b'$, $c'$, \dots\ are the tops  of distributive 4-cells 
as defined in G.~Cz\'edli~\cite{gC14}. 
Insert a fork (that is, a 1-fold multifork) into each of these cells; 
the lattice we obtain is denoted by $K$; see Figure~\ref{figbbsp};
the elements of $K - \Sn 7$ (that is, the new elements) are oval. 
We know that $K$ is a slim patch lattice, see G.~Cz\'edli~\cite[Proposition 3.3]{gC14}.
The top edges of the forks just inserted are neon tubes and also 1-tube lamps; 
let $a,b,c,\dots$ denote their feet. 
For each $i \in\set{1,\dots,n}$, for each left boundary lamp $X$ of $L_i$ and for each right boundary lamp $Y$ of $L_i$, 
turn the intersection $\RLit X\cap \LLit Y$, which is a 4-cell, into 
grey; see Figure~\ref{figbbsp} again. Observe that,
\begin{equation}
\parbox{6.8cm}{disregarding the gaps among them, these grey 4-cells for a given $i$ are positioned in the same way as the 4-cells of the grid $G_i$.}
\label{S:gaps}
\end{equation} 

By G.~Cz\'edli~\cite[Theorem 3.7]{gC14},  
$L_i$ is obtained from $G_i$ by a sequence of multi\-fork extensions at distributive 4-cells
for $i \in\set{1,\dots,n}$.
 By \eqref{S:gaps}, the 4-cells of $G_1,\dots, G_n$ are in bijective correspondence with the grey 4-cells of $K$. This allows us to 
\begin{equation}
\parbox{7.8cm}{perform the multifork extensions in the same way in $K$   as in the procedure that turns $G_i$ to $L_i$}
\label{S:K}
\end{equation} 
for each $i \in\set{1,\dots, n}$. 
After performing the multifork extensions 
at some grey 4-cells of $K$ associated with $G_1,\dots, G_{i-1}$
(and possibly at some new 4-cells that earlier multifork extensions created), 
the grey 4-cells associated with~$G_i$ are still distributive. 
So now we begin with the grey 4-cells of $K$ instead of the 4-cells of~$G_i$. 

Let $L$ denote the lattice we obtain from $K$ with the multifork extensions 
as in \eqref{S:K}. As a continuation of  Figure~\ref{figbbsp}, $L$ is drawn in  Figure~\ref{figgsch}. The gaps mentioned in \eqref{S:gaps} cause no trouble since the light of neon tubes can go through them with no side effect. 
Since $\Lit W=\fullrect K=\fullrect L$ gives that $\pair A W,\pair BW, \pair C W,\dots$ belong to $\rhfoot$, we obtain (by Lemma~\ref{L:lamp}) that 
\begin{equation}
\text{the inequalities } A< W,\,\, B<W,\,\, C<W,\,\, \dots  \text{  hold in }\Lamps L.
\label{S:lampL}
\end{equation}
Let $H_i$ denote the set of lamps that are (in the geometric sense) 
in the grey 4-cells associated with $G_i$. 
Then $H_i$ is an ordered subset of $\Lamps L$. 
It follows easily from \eqref{S:K} that $H_i\cong \Lamps{L_i}$. 
Since light only goes in the directions $\tuple{-1,-1}$ and $\tuple{1,-1}$, 
we obtain that no lamp of $H_i$ lights up any  $\foot{H_j}$ for $i\neq j$. 
Thus we obtain that $\Lamps L - \set{U,V,W} = H_1 \cardplus \cdots \cardplus H_n$. 
This equality, $H_i\cong \Lamps{L_i}$, and \eqref{S:lampL} yield that
\begin{equation}
\Lamps L\cong \bigl(\Lamps{L_1}\cardplus\cdots\cardplus\Lamps{L_n}\bigr)\ordplus\set{U,V,W},
\label{S:UVW}
\end{equation}
where $W\prec U$, $W\prec V$, and $U\parallel V$. 
Finally, \eqref{S:UVW} and the Representation Theorem of Finite Distributive Lattices 
imply the validity of 
\eqref{patch2} and complete the proof of Theorem~\ref{T:patch}.

\subsection{Patch lattices}\label{S:Patch}
G.\ Gr\"atzer~\cite[Problem 3]{gG16} asks to characterize the congruence lattices of slim patch lattices. We now summarize what we know about these congruence lattices  but Problem 3 of G.\ Gr\"atzer~\cite{gG16} remains open.
We start with an observation.

\begin{lemma}\label{L:Keylemma}
If $L$ is a slim rectangular lattice, then the following three conditions are equivalent.
\begin{enumeratei}
\item $L$ is a slim patch lattice.
\item $\Jir{\Con L}$ has exactly two maximal elements. 
\item There is a finite distributive lattice $D_0$ such that $\Con L\cong D_0 \gls \SB 2$.
\end{enumeratei}
\end{lemma}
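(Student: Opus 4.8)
The plan is to prove the three conditions equivalent by establishing the cycle (i)$\then$(ii)$\then$(iii)$\then$(i). The translation between the lattice $L$ and the combinatorial data of its lamps, given by Lemma~\ref{L:lamp}, is the central mechanism, so throughout I would work in $(\Lamps L;\leq)\cong\Jir{\Con L}$ and pass between maximal lamps and maximal join-irreducibles freely.

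For the implication (i)$\then$(ii), suppose $L$ is a slim patch lattice, so its left corner $\cornl L$ and right corner $\cornr L$ are coatoms. I would argue that the two top boundary chains $\filter{\cornl L}$ and $\filter{\cornr L}$ each contribute exactly one top boundary lamp whose peak is $1$, and that these are precisely the maximal lamps of $L$. The key point is that a maximal element of $(\Lamps L;\leq)$ must be a lamp that is not to the left or right of (hence not lit by) any other lamp; using \eqref{E:3lamps} and the fact that in a patch lattice the corners are coatoms, the only candidates are the lamps sitting on the two top edges at $1$. The left-right asymmetry forces exactly two such lamps, giving (ii). Conversely, for (ii)$\then$(i), I would use Theorem~\ref{T:Background}\eqref{E:2M}, which guarantees at least two maximal elements for any SPS lattice with at least three elements, together with the geometry: if $L$ were rectangular but \emph{not} patch, then at least one corner is not a coatom, which produces additional maximal boundary lamps (one can trace a maximal lamp hanging below the corner region), contradicting the assumption of exactly two maximal elements.

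The implication (ii)$\Leftrightarrow$(iii) is essentially a statement about finite distributive lattices and Birkhoff duality, independent of the lattice-geometric content. For a finite distributive lattice $D=\Con L$, having exactly two maximal elements in $\Jir D$ is dual to $D$ having exactly two coatoms, and $D\cong D_0\gls\SB2$ for some $D_0$ precisely when $D$ has a largest element covered by exactly two elements whose removal leaves a distributive lattice $D_0$ with the two coatoms as its new top elements. I would spell this out by observing that $\SB2$ is the four-element Boolean lattice (two atoms, i.e.\ two coatoms below the top), so gluing $\SB2$ on top of $D_0$ attaches a new top via two coatoms; the join-irreducibles of $D_0\gls\SB2$ are those of $D_0$ together with the two coatoms, and these two coatoms are exactly the maximal join-irreducibles. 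This makes (ii) and (iii) two sides of the same dual picture.

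The main obstacle I anticipate is the precise geometric bookkeeping in (i)$\then$(ii) and its converse: showing that the number of maximal lamps is controlled exactly by whether the corners are coatoms. One must rule out internal lamps from being maximal (an internal lamp has its body lit by something above it, or equivalently its peak lies strictly below $1$, so it is dominated), and one must carefully count the top boundary lamps. The cleanest route is to note that a lamp $I$ is maximal in $(\Lamps L;\leq)$ iff $\Peak I=1$, then observe that peaks equal to $1$ occur only on the top boundary chains, and that the patch condition ($\cornl L,\cornr L$ coatoms) is exactly what collapses each top chain to a single maximal lamp. I would lean on Theorem~\ref{T:Background}\eqref{E:2M} to supply the lower bound of two maximal elements and on \eqref{E:3lamps} and the $\bdia$-diagram structure of Section~\ref{S:tools} to supply the matching upper bound, so that ``exactly two'' pins down the patch property.
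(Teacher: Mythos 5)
Your proof of (i)$\Leftrightarrow$(ii) hinges on the characterization ``$I$ is maximal in $(\Lamps L;\leq)$ iff $\Peak I=1$'', and that characterization is false in both directions. Every boundary lamp is maximal, regardless of its peak: by Definition~\ref{deffRlsTs}, each of the relations $\rhgeomb$, $\rhfoot$, $\rhinfoot$, $\rhalg$ requires its first component to be an \emph{internal} lamp, so a boundary lamp is never strictly below any lamp. Thus in Figure~\ref{F:lamps} all eleven boundary lamps $P_1,\dots,P_5,Q_1,\dots,Q_6$ are maximal, although only the two topmost among them have peak $1$. In the other direction, an internal lamp can have peak $1$: the unique internal lamp $W$ of $\Sn 7$ in Figure~\ref{figbbsp} has $\Peak W=1$, yet $W$ lies below both boundary lamps $U$ and $V$ (its foot lies in $\Lit U=\fullrect{\Sn 7}=\Lit V$), so $W$ is not maximal. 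Consequently your step ``an internal lamp has its peak strictly below $1$, so it is dominated'' is not a proof; the statement you actually need---that no internal lamp is maximal, equivalently that the maximal lamps are exactly the boundary lamps---is G.~Cz\'edli~\cite[Lemma 3.2]{gCa}, which is precisely what the paper cites, and which your proposal neither cites nor correctly proves. Note also that your converse direction (ii)$\Rightarrow$(i) silently uses the correct statement (the extra boundary lamps under a non-coatom corner are maximal even though their peaks are below $1$), so your two directions run on incompatible notions of maximality; and the ``key point'' that a maximal lamp ``is not to the left or right of (hence not lit by) any other lamp'' conflates the relation $\rlambda$ with the order $\leq$---in a patch lattice $U\rlambda V$ holds while both $U$ and $V$ are maximal.

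The second gap is the claim that (ii)$\Leftrightarrow$(iii) is pure Birkhoff duality, ``independent of the lattice-geometric content''. The direction (iii)$\Rightarrow$(ii) is indeed pure duality, but (ii)$\Rightarrow$(iii) is false for general finite distributive lattices: take $D=\SC 3\times\SC 3$, whose join-irreducibles form two disjoint two-element chains and so have exactly two maximal elements; here the only element of $D$ whose principal filter is isomorphic to $\SB 2$ is the middle element $(1,1)$, and it is incomparable to $(2,0)$, whereas the gluing element of any $D_0\gls\SB 2$ is comparable to every element. What (iii) requires beyond (ii) is that every non-maximal element of $\Jir D$ lie below \emph{both} maximal elements, and for $D=\Con L$ this is exactly where the geometry must re-enter: once $L$ is known to be a patch lattice, the lit sets of its two boundary lamps $U=[\cornl L,1]$ and $V=[\cornr L,1]$ equal $\fullrect L$ (the same computation the paper makes for $\Lit W=\fullrect K$ in the proof of Theorem~\ref{T:patch}), so $(I,U)$ and $(I,V)$ belong to $\rhfoot$ for every internal lamp $I$, whence $I<U$ and $I<V$ by Lemma~\ref{L:lamp}; only then does the Structure Theorem of Finite Distributive Lattices produce $D_0$ with $\Con L\cong D_0\gls\SB 2$. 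The paper's own wording at this point is also terse, but its claim is confined to $\Con L$, where this geometric fact is available; your proposal instead rests on a general distributive-lattice equivalence that does not hold.
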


\begin{proof} By G.~Cz\'edli~\cite[Lemma  3.2]{gCa}, the maximal elements of $\Lamps L$ are exactly the boundary lamps. Hence, Lemma~\ref{L:lamp} implies that (i) is equivalent to (ii). 
This equivalence also easily follows from the Swing Lemma, see G. Gr\"atzer~\cite{gG15}.
Also,  the fact that  (ii) equivalent to (iii) holds by 
the Structure Theorem of Finite Distributive Lattices.
\end{proof}

The \eqref{patch2}-part of Theorem~\ref{T:patch} establishes a new connection between slim rectangular lattices and slim patch lattices; other connections have been explored by G.~Cz\'edli~\cite{gC14} and G.~Cz\'edli and E.\,T.~Schmidt~\cite{CS13}.

The four element boolean lattice $\SB 2$ and  the glued sum construction in part (iii) of Lemma~\ref{L:Keylemma} are well understood. 
So we focus on $D_0$ to describe the known properties of congruence lattices of slim patch lattices. The next statement reduces seven \emph{known} conditions that hold for congruence lattices of slim, planar, semimodular lattices by Theorems~\ref{T:Background}--\ref{T:Background1} and the Main Theorem  to four. 

\begin{corollary}\label{C:patch}
Let $D=\Con L$ be the congruence lattice of a slim patch lattice $L$. Then the following four statements hold.
\begin{enumeratei}
\item\label{pRtswWa} There exists a unique finite distributive lattice $D_0$ such that $D=D_0\gls \SB 2$.
\end{enumeratei}
In the next three statements, $D_0$ refers to the distributive lattice defined in \eqref{pRtswWa}.
\begin{enumeratei}\setcounter{enumi}{1}
\item\label{pRtswWb}  Every element of the ordered set $\Jir {D_0}$ has at most two covers.
\item\label{pRtswWc}  Two distinct maximal elements of the ordered set $\Jir {D_0}$ have no common lower cover.
\item\label{pRtswWd}  The ordered set $\Jir {D_0}$ satisfies the Three-pendant Three-crown Property.
\end{enumeratei}
Furthermore, if $L$ is a finite lattice, $D=\Con L$, and $D$ satisfies  \eqref{pRtswWa}, \eqref{pRtswWb} and \eqref{pRtswWc}  above, then $L$ also satisfies all the six properties listed in Theorems~\ref{T:Background} and \ref{T:Background1}.
\end{corollary}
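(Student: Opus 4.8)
The final claim of Corollary~\ref{C:patch} is a \emph{reduction}: it asserts that the seven known necessary conditions for representability (the three conditions of Theorem~\ref{T:Background} and Theorem~\ref{T:Background1}, plus the Three-pendant Three-crown Property of the Main Theorem) collapse, for congruence lattices of slim \emph{patch} lattices, to just the three conditions \eqref{pRtswWa}, \eqref{pRtswWb}, \eqref{pRtswWc}. More precisely: assuming $D=\Con L$ satisfies \eqref{pRtswWa}--\eqref{pRtswWc}, one must recover all six properties in Theorems~\ref{T:Background} and~\ref{T:Background1}.

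\begin{proof}[Proof plan]
The plan is to work entirely in the ordered set $P:=\Jir D$ and to exploit the decomposition $D=D_0\gls\SB 2$ supplied by \eqref{pRtswWa}. The first step is to understand $\Jir D$ in terms of $\Jir{D_0}$. Since $\SB 2$ is glued on top of $D_0$ (identifying the unit of $D_0$ with the zero of $\SB 2$), the join-irreducibles of $D$ are exactly those of $D_0$ together with the two atoms of $\SB 2$; writing $m_1,m_2$ for these two atoms, we get $\Jir D=\Jir{D_0}\cupdot\{m_1,m_2\}$, where $m_1,m_2$ are two incomparable elements each lying \emph{above} every maximal element of $\Jir{D_0}$ and above nothing else new. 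Thus $\Max{\Jir D}=\{m_1,m_2\}$ has exactly two elements, the covers of $m_1$ and $m_2$ in $\Jir D$ are precisely the maximal elements of $\Jir{D_0}$, and below the level of $m_1,m_2$ the covering relation of $\Jir D$ agrees with that of $\Jir{D_0}$. (I would record this structural description as a short preliminary computation, citing the Structure Theorem of Finite Distributive Lattices already invoked in Lemma~\ref{L:Keylemma}.)

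\emph{Second step: verify Theorem~\ref{T:Background}\eqref{E:2M} and the two clauses of Theorem~\ref{T:Background1} that involve maximal elements.} Property~\eqref{E:2M} is immediate from $\Max{\Jir D}=\{m_1,m_2\}$. For Theorem~\ref{T:Background1}\eqref{E:LC} I would take the partition $\Max{\Jir D}=\{m_1\}\cupdot\{m_2\}$: each block is a singleton, so the requirement that no two distinct elements of the same block share a common lower cover holds vacuously. The remaining clauses~(iii) and~(iv) of Theorem~\ref{T:Background1} and the clause (ii) of Theorem~\ref{T:Background1} concern configurations directly beneath maximal elements, and each reduces, via the structural description, to a statement about $\Jir{D_0}$ that follows from \eqref{pRtswWb} and \eqref{pRtswWc}: clause~(iii) says a maximal cover is never the unique cover, which one gets because any $x\in\Jir{D_0}$ covered by some $m_k$ is a maximal element of $\Jir{D_0}$ and hence is covered by \emph{both} $m_1$ and $m_2$; clause~(iv), forbidding a common lower cover of two elements beneath a fixed maximal element, follows from \eqref{pRtswWc} when that maximal element is one of $m_1,m_2$ and from the embedded copy of $\Jir{D_0}$ otherwise.

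\emph{Third step: the two ``shape'' properties.} For Theorem~\ref{T:Background}(ii) (every element has at most two covers), I would split on whether an element is maximal, is a maximal element of $\Jir{D_0}$, or lies strictly lower: maximal elements $m_1,m_2$ have no covers; an element below that level has the same covers in $\Jir D$ as in $\Jir{D_0}$, so \eqref{pRtswWb} bounds them by two \emph{except} for the maximal elements of $\Jir{D_0}$, which gain exactly the two new covers $m_1,m_2$ and lose their $\Jir{D_0}$-covers (they had none, being maximal), again giving at most two. The last and most delicate item is Theorem~\ref{T:Background1}(ii) and the Three-pendant Three-crown Property itself: here the hard part will be checking that neither the Two-pendant Four-crown obstruction $R$ (with its maximality side-condition) nor the Three-pendant Three-crown obstruction $R_3$ can be embedded cover-preservingly into $\Jir D$ \emph{given} only that $R_3$ cannot be so embedded into $\Jir{D_0}$. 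The main obstacle is that a purported embedding into $\Jir D$ might use one or both of the new top elements $m_1,m_2$; I would handle this by a case analysis on how many of the embedded vertices land on $\{m_1,m_2\}$. Since the crown/pendant obstructions have a prescribed number of maximal vertices and a prescribed cover-structure, and since $\Jir D$ adds only the two-element top antichain $\{m_1,m_2\}$ sitting above the maximal elements of $\Jir{D_0}$, any cover-preserving copy that meets $\{m_1,m_2\}$ must place its ``top layer'' there; the cover-preservation forces the next layer down into $\Max{\Jir{D_0}}$, and then \eqref{pRtswWc} (no shared lower cover among distinct maximal elements of $\Jir{D_0}$) contradicts the crown's internal covering pattern. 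The residual case, where the copy avoids $\{m_1,m_2\}$ entirely, lives inside $\Jir{D_0}$ and is excluded by \eqref{pRtswWd} together with Theorem~\ref{T:Background1}(ii) as already established for $\Jir{D_0}$. Assembling the cases completes the verification of all six properties.
\end{proof}
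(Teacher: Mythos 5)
Your proposal covers only the final (``Furthermore'') claim of the corollary; parts \eqref{pRtswWa}--\eqref{pRtswWd} are never proved. In the paper these come first: \eqref{pRtswWa} is precisely the equivalence of (i) and (iii) in Lemma~\ref{L:Keylemma} (a slim patch lattice is slim rectangular), and \eqref{pRtswWb}--\eqref{pRtswWd} are obtained by transporting Theorem~\ref{T:Background}(ii), Theorem~\ref{T:Background1}(iv), and the Main Theorem through the isomorphism $\Jir D=\Jir{D_0}\gls\msf{A}_2$ --- the very decomposition you set up, used in the opposite direction. For the half you do address, your description of $\Jir D$ and your verification of Theorem~\ref{T:Background}(i),(ii) and of Theorem~\ref{T:Background1}(i),(iii),(iv) are correct, and they agree with what the paper dismisses as a trivial consequence of that isomorphism.

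The genuine gap is your treatment of Theorem~\ref{T:Background1}(ii). The ``Furthermore'' statement assumes only \eqref{pRtswWa}, \eqref{pRtswWb}, and \eqref{pRtswWc}, yet you dispose of the ``residual case'' by invoking \eqref{pRtswWd} ``together with Theorem~\ref{T:Background1}(ii) as already established for $\Jir{D_0}$'': the first is not among the hypotheses, and the second was never established and cannot even be invoked, since $D_0$ is merely a finite distributive lattice and is not given as the congruence lattice of any SPS lattice, so Theorem~\ref{T:Background1} says nothing about it. The same unavailable hypothesis underlies your attempt to prove the Three-pendant Three-crown Property for $\Jir D$, which the ``Furthermore'' claim does not assert and which cannot follow from \eqref{pRtswWa}--\eqref{pRtswWc} alone (otherwise \eqref{pRtswWd} would be redundant in the corollary). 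What you overlooked is the proviso in Theorem~\ref{T:Background1}(ii) that \emph{every} maximal element of $R$ be a maximal element of $\Jir{\Con L}$: the Two-pendant Four-crown $R$ has four maximal elements (its crown tops, see Figure~\ref{F:properties}), while $\Max{\Jir D}=\set{m_1,m_2}$ has only two, so no injective cover-preserving map can satisfy the proviso and the property holds vacuously; in particular, a copy of $R$ lying inside $\Jir{D_0}$ violates the proviso automatically, because no element of $\Jir{D_0}$ is maximal in $\Jir D$. With this observation, your case analysis and every appeal to \eqref{pRtswWd} become unnecessary.
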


\begin{proof} 
Part \eqref{pRtswWa} follows from Lemma~\ref{L:Keylemma}. Let $\msf A_2$ denote the two element antichain. Observe that if $D=D_0\gls \SB 2$, then $\Jir D=\Jir{D_0}\gls \msf A_2$. Hence,
applying
Theorem~\ref{T:Background}(ii), 
Theorem~\ref{T:Background1}(iv), and the Main Theorem, we obtain parts \eqref{pRtswWb}, \eqref{pRtswWc}, and \eqref{pRtswWd}, respectively. The rest of the corollary is a trivial consequence of $\Jir D=\Jir{D_0}\gls \msf A_2$.
\end{proof}

\end{document}